\newtheorem{lemma}{Lemma}
\newtheorem{theorem}{Theorem}
\newtheorem{question}{Question}
\newtheorem{prop}{Proposition}
\theoremstyle{definition}
\newtheorem{defn}{Definition}
\newtheorem*{notation}{Notation}
\theoremstyle{remark}
\newtheorem{remark}{Remark}
\newcommand{\Rt}{\mathbb{R}^3}
\newcommand{\Rf}{\mathbb{R}^4}
\begin{document}

\title{Exact Lagrangian caps of Legendrian knots} 

\author{Francesco Lin}
\address{Department of Mathematics, Massachusetts Institute of Technology} 
\email{linf@math.mit.edu}

\begin{abstract} 
We prove that any Legendrian knot in $(S^3,\xi_{std})$ bounds an exact Lagrangian surface in $\mathbb{R}^4\setminus B^4$ after a sufficient number of stabilizations. In order to show this, we construct a family combinatorial moves on knot projections with some additional data that correspond to Lagrangian cobordisms between knots.
\end{abstract}
\maketitle

\section*{Introduction}
Given a Legendrian knot $K$ in the contact manifold $(S^3,\xi_{std})$, it is natural to ask whether or not it bounds a Lagrangian surface $L$ in its standard (convex) filling $(B^4, \omega_{std})$. In  \cite{Cha} it is shown that the existence of such a surface (which we call a \textit{Lagrangian filling} of $K$) implies the identities
\begin{IEEEeqnarray*}{c}
\mathrm{tb}(K)=-\chi(L)\\
 \mathrm{rot}(K)=0
\end{IEEEeqnarray*}
for the classical invariants of the Legendrian knot, and combining these with the slice Thurston-Bennequin inequality (\cite{R})
\begin{equation*}
\mathrm{tb}(K)+|\mathrm{rot}(K)|\leq 2g_4(K)-1,
\end{equation*}
where $g_4(K)$ is the $4$-ball genus of $K$, it follows that such a surface $L$ indeed realizes the $4$-ball genus of $K$ and $K$ has maximal Thurston-Bennequin invariant within its topological class. Hence there are very restrictive limitations on the existence of such surfaces, even if we do not impose any exactness condition.
\par
On the other hand, the same technique tells us much less about \textit{Lagrangian caps} of $K$, i.e. Lagrangian surfaces $L$ properly embedded in $\Rf\setminus B^4$ bounding the given knot. This is because the constraints on the classical invariants in this case are
\begin{IEEEeqnarray*}{c}
\mathrm{tb}(K)=\chi(L)\\
 \mathrm{rot}(K)=0,
\end{IEEEeqnarray*}
and by applying the slice Thurston-Bennequin inequality again one obtains the much less restrictive conditions
\begin{IEEEeqnarray*}{c}
g(L)\geq 1\\
\mathrm{tb}(K)\leq-1.
\end{IEEEeqnarray*}
Indeed, the aim of the present paper is to prove the following result.
\begin{theorem}Given any Legendrian knot $K$ in $(S^3,\xi_{std})$, one can stabilize it a sufficient number of times so that the final result admits an exact Lagrangian cap.
\end{theorem}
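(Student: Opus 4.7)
The plan is to convert the existence question into a combinatorial calculus of moves on Legendrian front projections, each of which corresponds to an explicit exact Lagrangian cobordism piece in $\Rf\setminus B^4$. I would first enrich front diagrams with a decoration (for instance a choice of primitive along each arc together with oriented labels at the crossings) that records the extra data needed to specify the Lagrangian building block. The elementary moves would be: Legendrian Reidemeister moves on the underlying front (trivial cobordisms); pinch moves at suitably oriented crossings (saddle cobordisms, which lower $\chi$ by $1$); and the birth of a standard maximal-$\mathrm{tb}$ Legendrian unknot ($0$-handle). Checking that each of these genuinely produces an \emph{exact} Lagrangian amounts to writing down a local model in $\Rf$ and verifying that the Liouville primitive restricts to an exact one-form on the new boundary components, which is a direct computation for each building block.

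With this calculus in place, the strategy is the standard one for unobstructed existence results: stabilize $K$ a large number of times, and then algorithmically reduce it via the allowed moves. Stabilization plays two roles. First, it forces the numerical constraint $\mathrm{tb}(K)=\chi(L)$ to be satisfiable by a high-genus surface, since each stabilization decreases $\mathrm{tb}$ by $1$, allowing the cap to absorb the many saddles one will need to introduce. Second, the extra zig-zags introduced by stabilization provide the combinatorial ``slack'' required to perform the eventual closing-off of the diagram, circumventing obstructions coming from the local rotation number. I would then fix a front diagram of the stabilized $K$ and apply a pinch move at each of its crossings, together with Legendrian isotopies, to simplify the Legendrian type to a disjoint union of standard maximal-$\mathrm{tb}$ Legendrian unknots.

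The main obstacle is the final step: closing up the cobordism at its top end by an honest exact Lagrangian cap of what remains. A disk cap is ruled out by the slice Thurston--Bennequin inequality, so each component must be capped by a surface of positive genus, and one cannot in general simply run a $0$-handle move in reverse as that would fail to produce an exact Lagrangian. To resolve this I would construct an explicit base-case cap for a suitably stabilized standard Legendrian by exhibiting, in coordinates, a once-punctured higher-genus surface in $\Rf \setminus B^4$ built from a symmetric pair of complementary pinch moves applied to the two strands of a zig-zag, and verifying exactness directly from the local model. Concatenating the reduction cobordism from the previous paragraph with this explicit model cap produces the desired exact Lagrangian cap for the stabilized $K$, completing the proof.
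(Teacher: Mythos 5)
Your reduction step is broadly in the spirit of the paper (stabilize, then use Legendrian Reidemeister moves and saddle/pinch cobordisms to simplify to standard unknots), but the step you yourself flag as the main obstacle is precisely where the proposal breaks down, and your proposed fix does not supply a working mechanism. Pinch moves never close off boundary: after any sequence of saddles applied to the strands of a zig-zag you still have a Legendrian link at the top of the cobordism, and the only way to finish a cap is to cap those top circles by disks. But an exact Lagrangian cap of a Legendrian knot $K$ forces $\mathrm{tb}(K)=\chi(L)$, and a Legendrian unknot has $\mathrm{tb}\leq -1$, so no component with Legendrian slices can ever be closed by a disk. This is why the paper's essential new ingredient is to leave the Legendrian category altogether: it introduces \emph{Lagrangian diagrams} (Lagrangian projections decorated with the areas of the complementary domains), proves that a list of moves $R_0,R_2,R_3,H_1,H_2$ on such diagrams is realized by embedded Lagrangian cobordisms whose slices need not be Legendrian, and closes the cap on the non-Legendrian curve $U_1$ (a figure-eight projection with two lobes of equal area) using an explicitly parametrized Lagrangian disk. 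Any scheme whose slices remain Legendrian, as yours does, cannot reach this final step, so the ``base-case cap built from complementary pinch moves'' does not exist as described.

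The second gap is your treatment of exactness. Verifying that each local building block has an exact local model is not enough: exactness of the assembled surface is a global condition on periods of the Liouville primitive over cycles created by the handle attachments. The paper devotes a separate criterion to this (every component must enclose zero signed area at every stage, and handle attachments merging components may only be performed on vertically split components), and it exhibits non-exact tori assembled from exactly these local moves --- including one in which every slice encloses zero area --- showing that local checks, and even the area-zero condition alone, are insufficient. To make your argument work you would need both the passage to non-Legendrian Lagrangian diagrams for the closing step and a global exactness argument of this kind, e.g.\ exploiting the freedom in the height function of the handle-attachment regions to kill the periods along the new cycles.
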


More precisely, the theorem tells us the following. We say that a Lagrangian $L$ with Legendrian boundary $K$ is \textit{exact} if the pull-back of the primitive of the symplectic form $\lambda_{std}$ on $L$ has a primitive vanishing along $K$. This is equivalent to say that $\lambda_{std}\lvert_L$ has integral zero along every closed curve on $L$ and every arc with boundary on $K$. Moreover, recall that a \textit{stabilization} of a Legendrian knot $K$ is a Legendrian knot obtained by performing one of the local moves of Figure $1$ in the front projection. The Legendrian isotopy type of the stabilized knot only depends on the effect of the stabilization on the rotation number (which is $\pm1$). Then the theorem says that for every Legendrian knot $K$ there is an $n(K)$ such that a knot obtained by stabilizing $K$ more that $n(K)$ times and such that $\mathrm{rot}(K)=0$ admits an exact Lagrangian cap.

\begin{figure}[here]
  \centering
\def\svgwidth{0.8\textwidth}
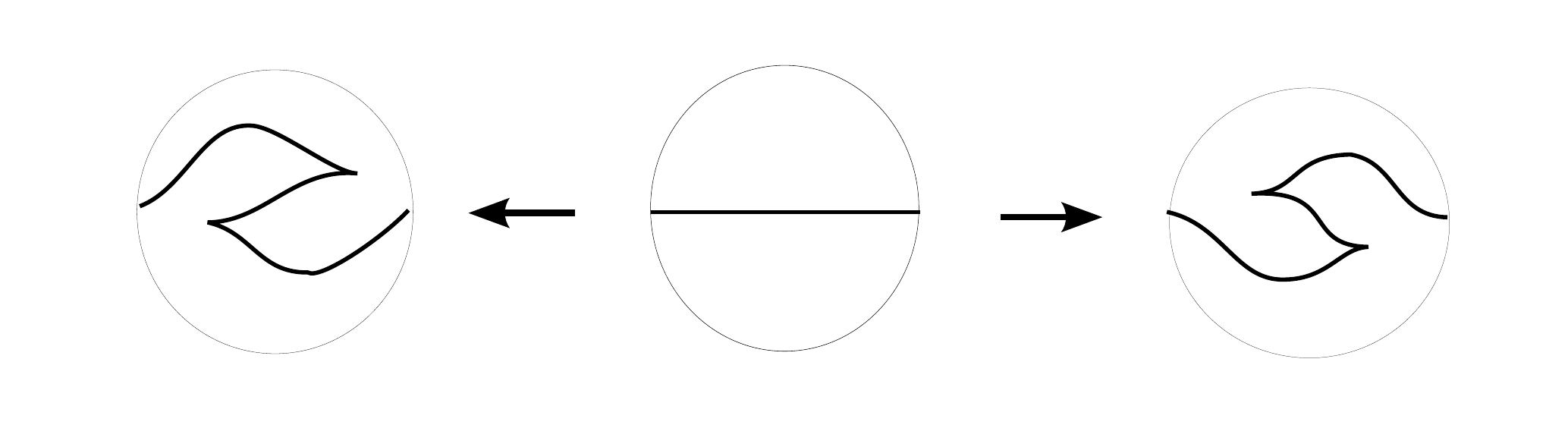
    \caption{Stabilization of a Legendrian knot.}
\end{figure} 
\begin{remark}The higher dimensional analogue of the problem is governed by an $h$-principle, see \cite{EM}. In \cite{Riz}, the existence of such caps is used to show the existence of (non orientable) closed embedded exact Lagrangians in $\mathbb{C}^2$ which are not uniruled and have infinite Gromov width.
\end{remark}

Here, and in the rest of the paper, we will work in the equivalent setting (for our problem) of $(\Rt, \xi_{std})$ where $\Rt$ has coordinates $(x,y,z)$, $\xi_{std}=\mathrm{ker}(\alpha_{std})$ with $\alpha_{std}=dz+xdy$ (notice that with this convention the front projection is the projection on the $yz$-plane while the Lagrangian projection is the projection to the $xy$-plane, rotated clockwise by $90$ degrees.). This means that we are interested in searching Lagrangian surfaces in the symplectization $(\mathbb{R}_t^{\geq0}\times\Rt, d(e^t\alpha_{std}))$ where the first factor is
\begin{equation*}\mathbb{R}_t^{\geq0}=\{t\in \mathbb{R}|t\geq0\}.
\end{equation*}
On the other hand, it will be more convenient for us to work with surfaces in
\begin{IEEEeqnarray*}{c}
\Rf_+=\{(x',y',t',z')| t'\geq0\}\subset (\Rf,\omega_{std})\\
\omega_{std}=dx'\wedge dy'+ dt'\wedge dz',
\end{IEEEeqnarray*}
with boundary condition on the hyperplane $\{t'=0\}$. In this case, our preferred primitive for the symplectic form will be the one form
\begin{equation*}
\lambda_{std}=x'dy'+(t'+1)dz'.
\end{equation*}
These models are indeed equivalent for our purposes via the symplectomorphism
\begin{IEEEeqnarray*}{c}
\Phi: \mathbb{R}_t^{\geq0}\times\Rt\rightarrow \Rf_+\\
(x,y,t,z)\mapsto (e^tx,y,e^t-1,z),
\end{IEEEeqnarray*}
which is a strict contactomorphism on the boundary and preserves the fixed primitives of the symplectic forms. We will always work with the latter, and drop the apices in the notation.
\\
\par
In order to prove Theorem $1$, we study the set of \textit{Lagrangian diagrams} of a surface $L\subset \Rf_+$. A Lagrangian diagram is simply the Lagrangian projection of a generic slice on which the $t$-coordinate is constant together with the assignment to each component of the complement of the knot projection of its volume with respect to the standard area form $dx\wedge dy$. We will introduce a set of combinatorial moves on such diagrams (up to some identifications) and prove that they can always be realized by Lagrangian cobordisms. These constructions are inspired by the techniques developed in \cite{Sau} to show the existence of immersed exact Lagrangians in $\mathbb{C}^2$ with few self-intersections.
\\
\par
With this in hand, the strategy to prove Theorem $1$ is the following. First, we prove that after a sufficient number of stabilizations each knot is exactly cobordant to the Legendrian unknot in Figure $2$, which we denote $U_0$. Then we will use the combinatorial moves to explicitly construct an exact Lagrangian cap of $U_0$. Also, we prove that if a knot $K$ admits a cap (hence $\mathrm{rot}(K)=0$) then also its double stabilization with rotation number zero admits a cap, justifying the ``sufficiently many stabilization" statement in Theorem $1$.
\begin{figure}[here]
\centering
\def\svgwidth{0.8\textwidth}
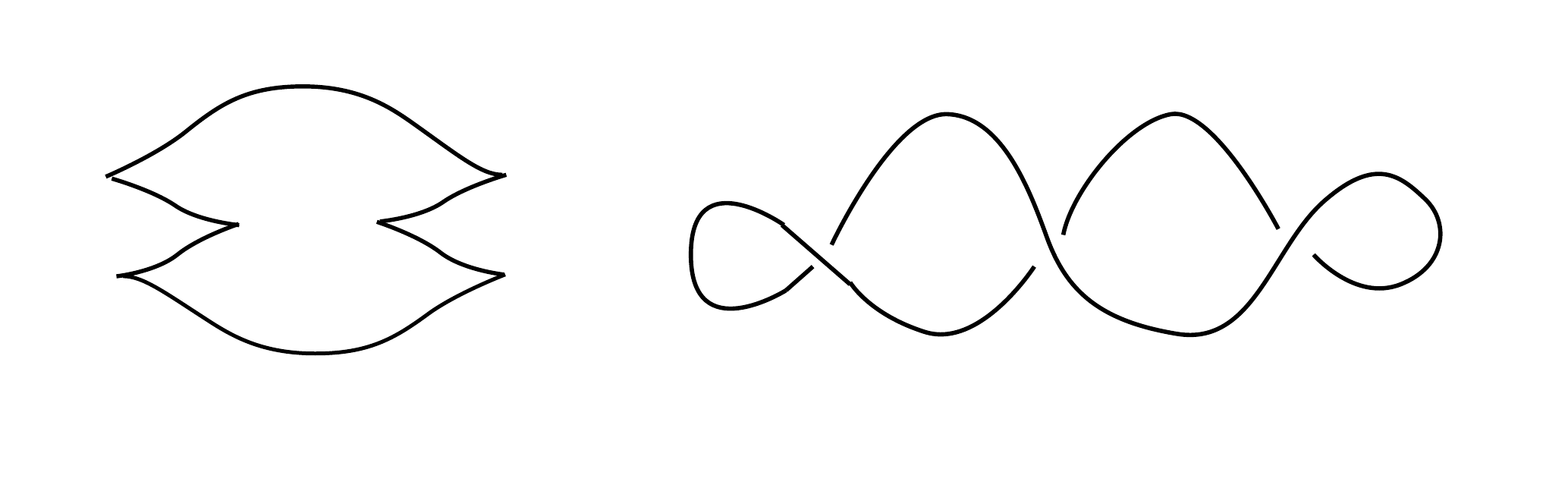
     \caption{A front projection and a Lagrangian diagram of the knot $U_0$.}
\end{figure}
\begin{figure}[here]
  \centering
\def\svgwidth{0.8\textwidth}
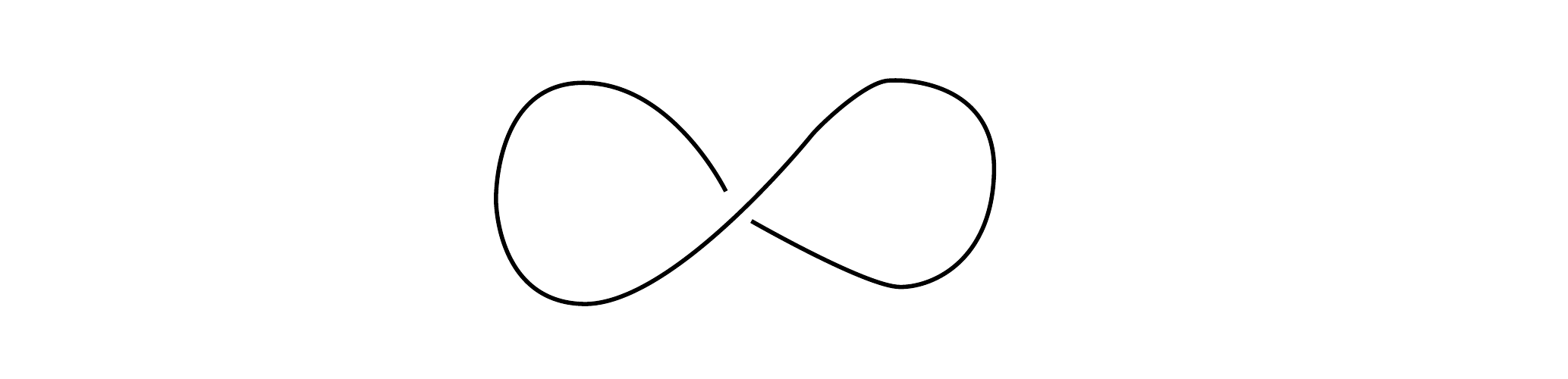
     \caption{A Lagrangian diagram of $U_1$.}
\end{figure}

It is interesting to point out that while in the first step we can restrict ourselves to Lagrangians such that the generic slice where the $t$ coordinate is constant is Legendrian, in order to construct a cap we necessarily have to rely on a wider class of Lagrangians. In fact the slices with $t$ constant just before the cap closes are not Legendrian knots, but have a Lagrangian diagram $U_1$ as in Figure $3$.
\\
\par
This is the plan for the rest of the paper. In Section $1$ we show how to reduce the problem of constructing a cap to the single knot $U_0$. In Section $2$ we define Lagrangian diagrams and construct the combinatorial moves on them corresponding to Lagrangian cobordisms. In Section $3$ we prove an exactness condition for the cobordism in terms of the Lagrangian diagrams from which it arises. Finally in Section $4$ we explicitly construct an exact Lagrangian cap for the knot $U_0$. In Section $5$ we discuss some interesting problems that arise.
\\
\par
\textit{Acknowledgments. }The author would like express his gratitude to Emmy Murphy for suggesting the problem and for all the very helpful and interesting discussions throughout all the time the present paper has been developed.

\vspace{1cm}
\section{Reduction to $U_0$}
In this section we prove the following result.
\begin{prop}
Given any Legendrian knot $K$, there is a Legendrian knot $K'$ obtained from $K$ via stabilizations such that there  exists an exact Lagrangian cobordism from $K'$ to the Legendrian unknot $U_0$ of Figure $2$.
\end{prop}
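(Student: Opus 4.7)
The plan is to assemble the desired cobordism from $K'$ to $U_0$ out of three elementary pieces, each of which is known to be an exact Lagrangian cobordism in its own right: the trace of a Legendrian isotopy, a pinch (saddle) move at a Reeb chord, and an exact Lagrangian disk attached from above to a standard Legendrian unknot with $\mathrm{tb}=-1$ and $\mathrm{rot}=0$. Reading from bottom to top, these pieces will first resolve every crossing of the front projection of $K'$ into a Legendrian unlink, and then cap off every component except one, leaving a single standard unknot that can be identified with $U_0$.

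The first step is to replace $K$ by a suitable stabilization $K'$. Since inserting a pair of stabilizations of opposite sign leaves $\mathrm{rot}$ unchanged, I am free to add as many such pairs as I want. After doing so, I isotope the front projection so that every crossing sits immediately adjacent to a freshly introduced zigzag, so that each crossing corresponds to a short Reeb chord that can be pinched by a Lagrangian $1$-handle. I then perform these pinches in sequence until no crossings remain.

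The top of the resulting partial cobordism is a Legendrian unlink each of whose components is a standard maximal-$\mathrm{tb}$ unknot. To each of them except one, I glue the standard exact Lagrangian disk \emph{from above}, i.e.\ as a maximum cobordism. This leaves a single unknot component, and a final Legendrian isotopy brings it to the specific $U_0$ of Figure $2$, completing the cobordism.

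The crux of the argument, and the reason the stabilization hypothesis is genuinely needed, is to keep the entire composite cobordism \emph{exact}. A pinch realizes an exact Lagrangian $1$-handle precisely when a specific bounded symplectic area in the Lagrangian projection vanishes, and each disk cap imposes further action conditions on the loops and boundary arcs it creates. I would bookkeep these constraints using the Lagrangian diagrams introduced in Section $2$, in which each complementary region of the projection carries its area as a real label: the zigzags from the stabilizations produce extra regions whose areas are tunable parameters, and by choosing these parameters appropriately --- and, if necessary, inserting still more cancelling pairs of stabilizations before the offending pinch --- every area constraint can be satisfied simultaneously. This flexibility, available only once we are allowed enough stabilizations, is exactly the content of the quantifier ``stabilize $K$ enough times'' in the statement, and represents the main technical obstacle in the proof.
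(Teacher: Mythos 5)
Your decomposition breaks down at the step where you ``cap off every component except one'' by gluing ``the standard exact Lagrangian disk from above, i.e.\ as a maximum cobordism.'' No such piece exists: a Lagrangian surface attached from above to a Legendrian unknot is a Lagrangian cap, and for caps the constraint recalled in the introduction is $\mathrm{tb}(K)=\chi(L)$, so a disk cap would force $\mathrm{tb}=1$, which is impossible since the Legendrian unknot has maximal Thurston--Bennequin number $-1$; indeed the slice Thurston--Bennequin inequality shows every Lagrangian cap has genus at least $1$. The standard exact Lagrangian disk can only be used in the opposite direction, as a filling (a minimum/birth of an unknot component), never as a maximum killing a component. This is not a technical detail but the central difficulty of the whole paper --- it is exactly why constructing caps requires leaving the Legendrian world and why the cap of $U_0$ built in Section~$4$ has positive genus. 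The paper's proof of this proposition avoids the issue entirely: after resolving the crossings (via stabilizations, handle moves $H$ and Reidemeister moves, as in your first step), it does not discard the resulting unlink components but \emph{merges} them back into a single Legendrian unknot by further handle attachments (connected sums along cusps in the front projection), puts the result in the standard form $K(n,n)$, and then reduces $K(n,n)$ to $K(n-1,n-1)$ inductively by one more pinch-and-reconnect, arriving at $U_0=K(1,1)$.

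A secondary point: your proposed area bookkeeping with Lagrangian diagrams is not needed for this proposition and does not reflect where exactness enters. All the elementary pieces used here --- traces of Legendrian isotopies and handle attachments produced from fronts in the jet space $J^1\mathbb{R}^2$ --- are automatically exact Lagrangian cobordisms between Legendrian links; no area conditions on the Lagrangian projection arise at this stage. The stabilizations are needed only to make the local crossing resolutions possible in the front (two zigzags per crossing), not to tune areas; the area/commensurability formalism of Sections~$2$ and~$3$ becomes relevant only later, when one constructs the genuinely non-Legendrian part of the cap of $U_0$.
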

We will work only using front projections and rely on the moves depicted in Figure $4$, which already appear for example in \cite{EHK}.

\begin{figure}[here]
  \centering
\def\svgwidth{0.8\textwidth}
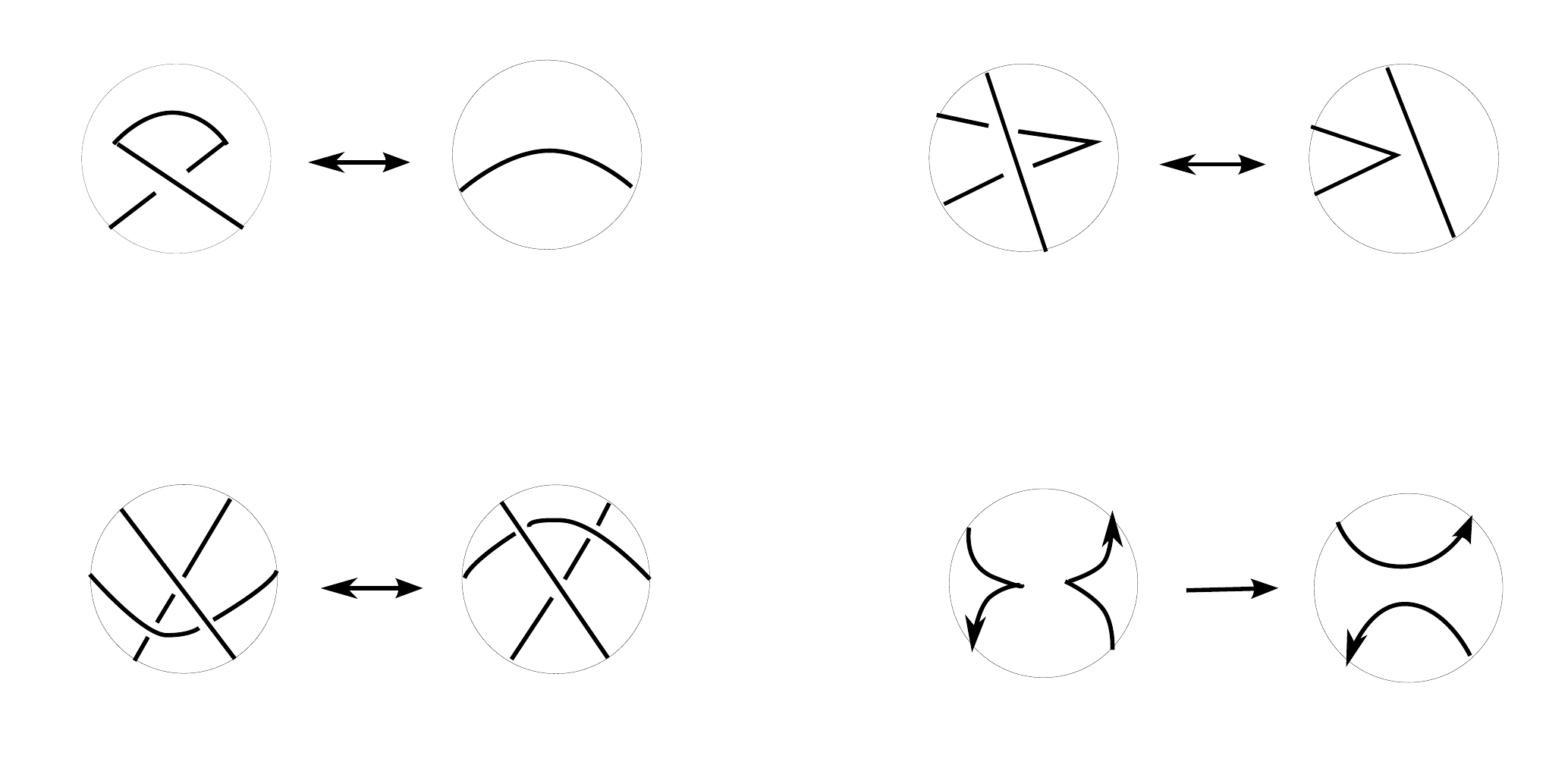
     \caption{Moves on front projections.}
\end{figure}

The moves $R_1,R_2$ and $R_3$ are the Legendrian Reidemester moves, and the fact that they can be realized by a Lagrangian cobordism follows from a result by Eliashberg and Gromov (\cite{EG}) stating that every Legendrian isotopy can be indeed realized in this way. On the other hand, the handle attachment move $H$ can be realized starting from Lagrangians in the first jet space $J^1\mathbb{R}^2$ by means of the following more general construction.
\begin{figure}[here]
  \centering
   \def\svgwidth{0.8\textwidth}
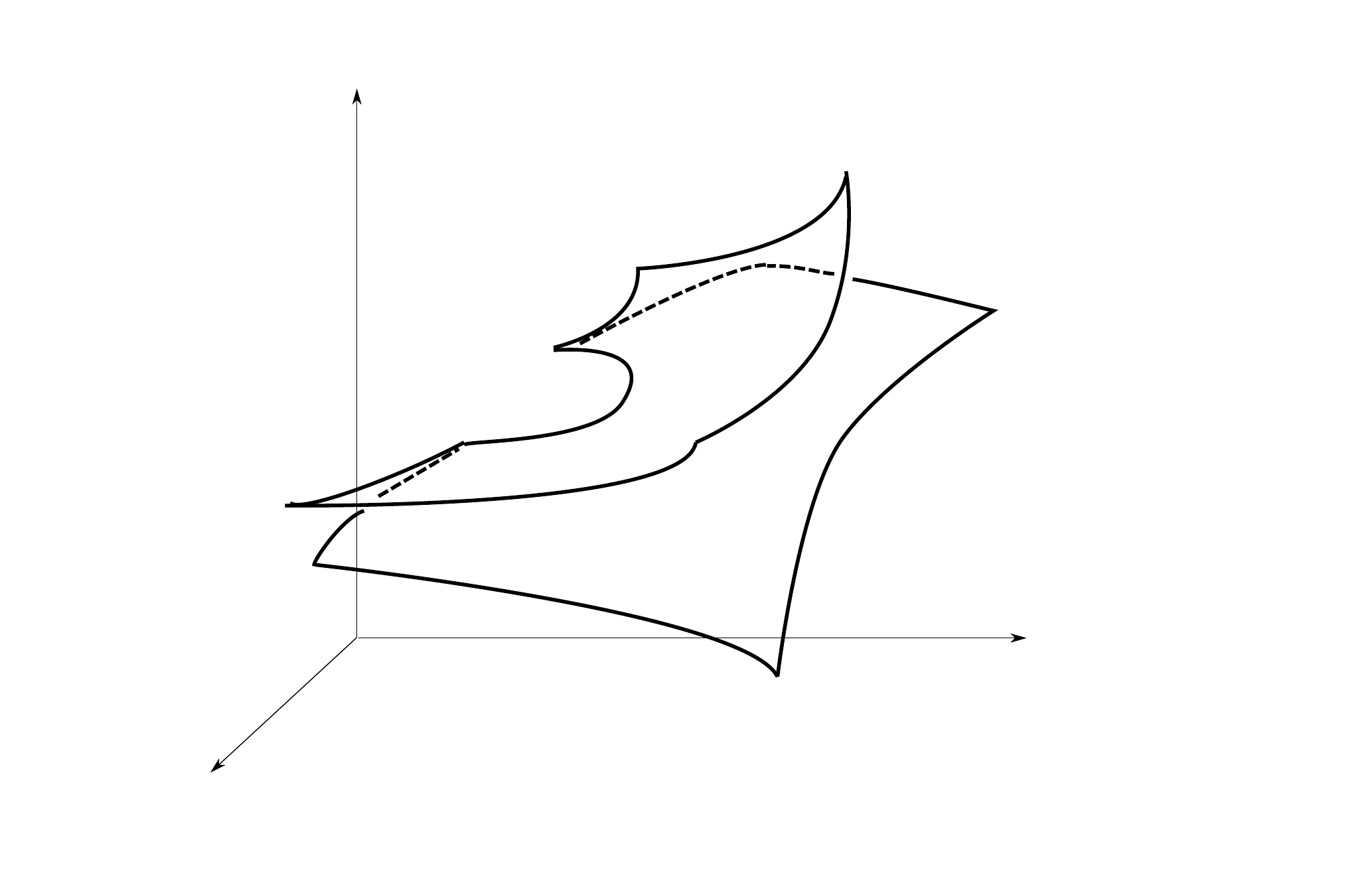     \caption{A front projection giving rise to the handle attachment move $H$.}
\end{figure}

Any front projection (with coordinates $(z,y_1,y_2))$ with boundary in $y_2=\{a,b\}$ such that:
\begin{itemize}
\item there are no points with the same $y_i$ coordinates and parallel tangent planes at those points;
\item the length of the Reeb chords in the slices with $y_2$ constant is increasing at the boundary,
\end{itemize}
corresponds to an exact embedded Lagrangian cobordism between the Legendrian knots arising as the slices $y_2=\{a,b\}$. In particular, the front projection in Figure $5$ corresponds to the move $H$.
\begin{remark}
Notice that in both constructions one does not have any control on the time interval in which the cobordism is defined. For example, in the jet spaces construction a non-trivial change of coordinates (depending on the front projection itself) is made in order to encode in a simple way the Legendrian boundary condition.
\end{remark}
We are now ready to describe how to construct a Lagrangian cobordism to $U_0$.
\begin{proof}[Proof of Proposition 1]
In our description we will add stabilizations along the way, but it will be clear from the construction that all of them can be performed at the beginning without affecting the final result. The proof is divided in various steps.
\\
\par
\textit{Step 1. }We show that up to stabilizations, every Legendrian knot is cobordant to one such that every component is an unknot and in the front projection each component is contained in a disk disjoint from the other ones. To do so, we perform a sequence of local moves near each crossing. There are essentially two cases up to orientation reversing (see Figure $6$).
\begin{figure}[here]
  \centering
\def\svgwidth{0.8\textwidth}
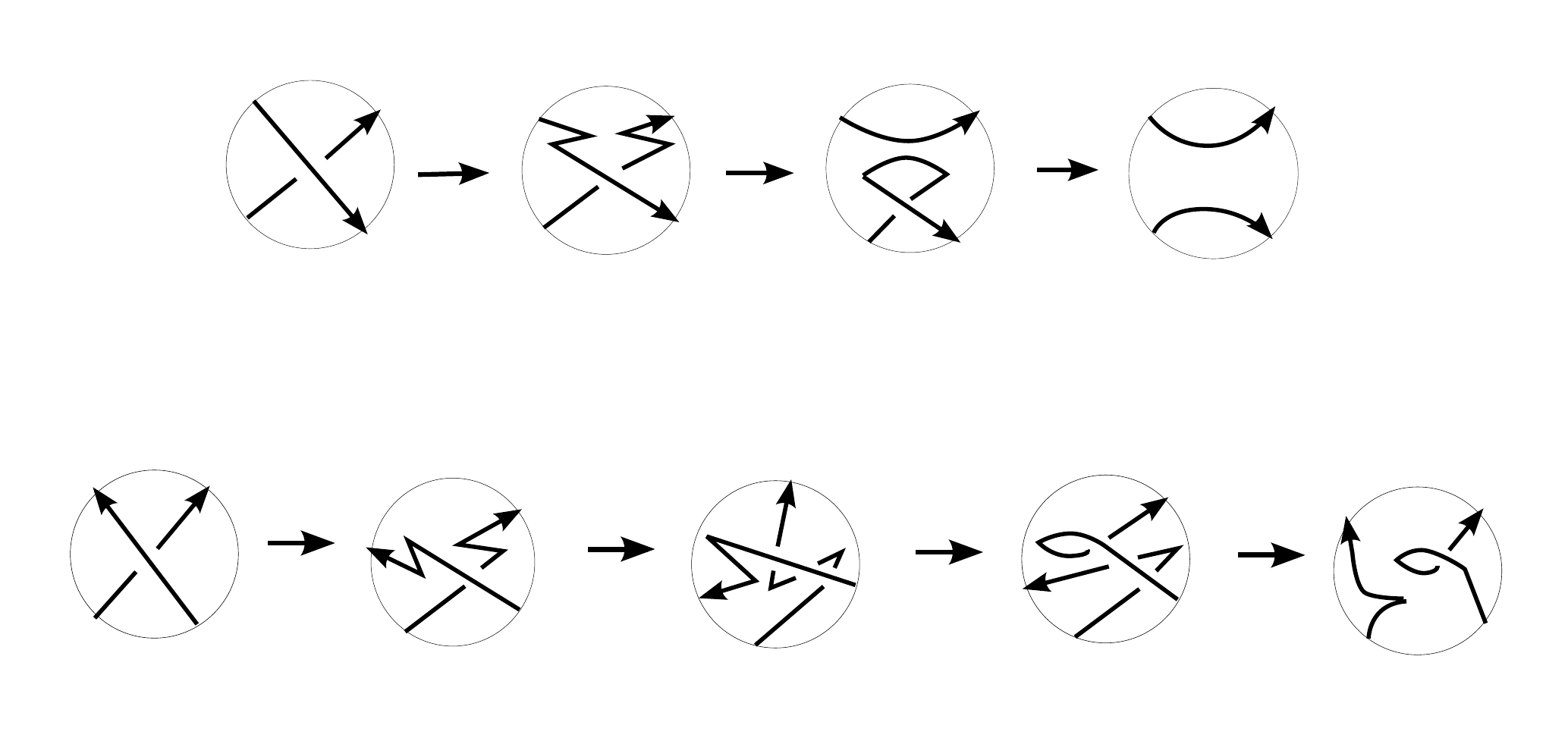
     \caption{The two cases in Step $1$.}
\end{figure}
In the first case one adds two stabilizations, uses the move $H$ and then $R_1$ to resolve the crossing, while in the second case one adds two stabilizations and then performs an $R_2$ move, a handle attachment and another $R_2$ move to simplify the crossing. It is clear that after performing these moves to every crossing one obtains a Legendrian link as the claimed one.
\\
\par
\textit{Step 2. }We show that we can reduce to a standard unknot $K(n,m)$ as in Figure $7$. Here $m$ is number of descending cusps pointing to the right, and $n$ is the number of ascending cusps pointing to the left (for example, the knot in the figure is $K(2,1)$).
\begin{figure}[here]
  \centering
\def\svgwidth{0.8\textwidth}
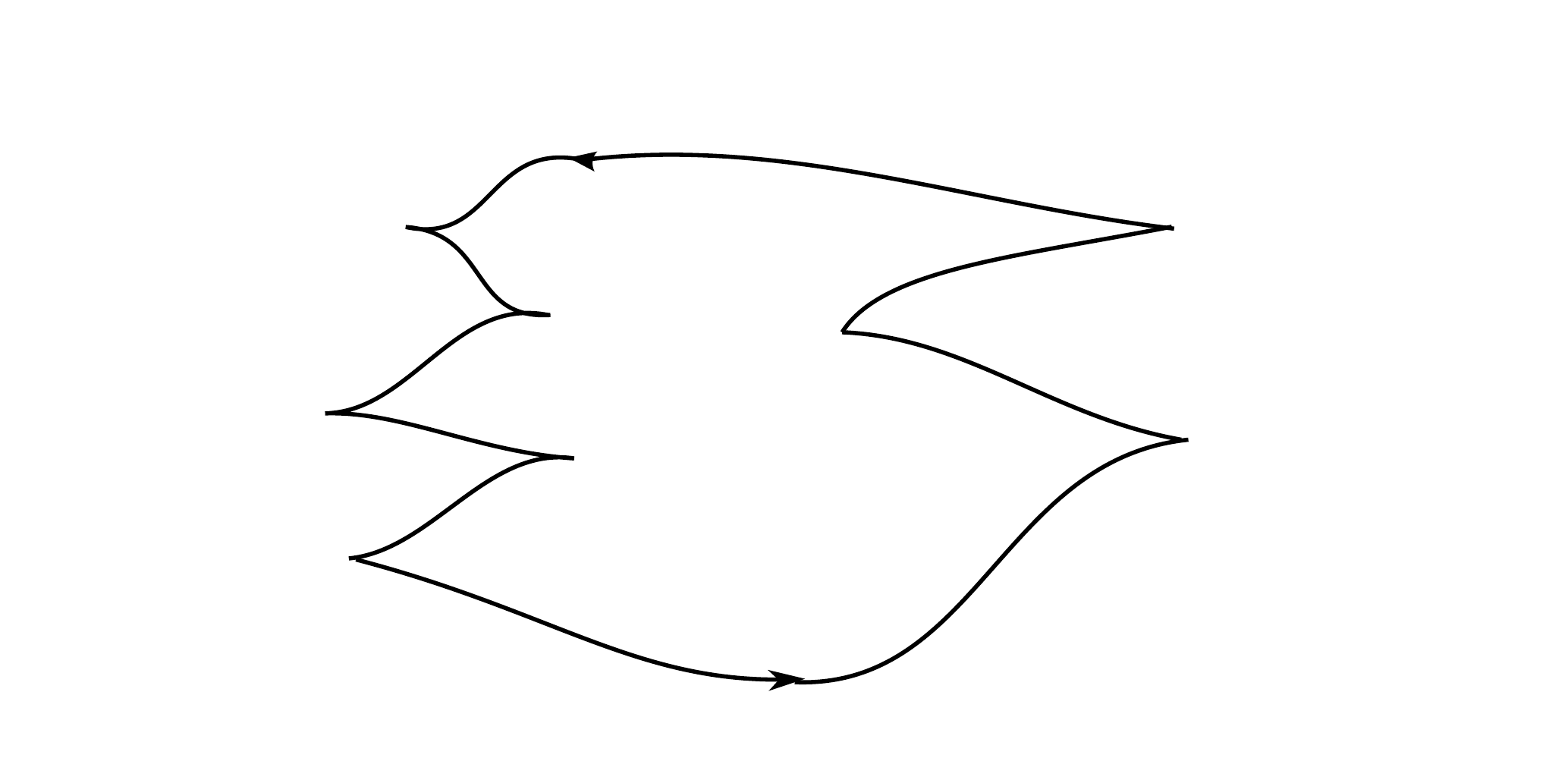
     \caption{A standard form for Legendrian unknots.}
\end{figure}
In fact, it is a classical consequence (see for example \cite{Gei}) of the Thurston-Bennequin inequality and the Eliashberg-Fraser Theorem (see \cite{EF}) that every Legendrian unknot is Legendrian isotopic to some $K(n,m)$ (here in order to construct this isotopy one might need to use some Legendrian $R_3$ move). Now, using the fact that in our front projection each component is cointained in a disk disjoint from the other ones, we can put every component of our link in such a form. Then, using an $H$ move we can connect sum them along the cusps as in Figure $8$ (notice that the orientations match) in order to obtain a single unknot, which can be also isotoped to a standard model.
\begin{figure}[here]
  \centering
\def\svgwidth{0.8\textwidth}
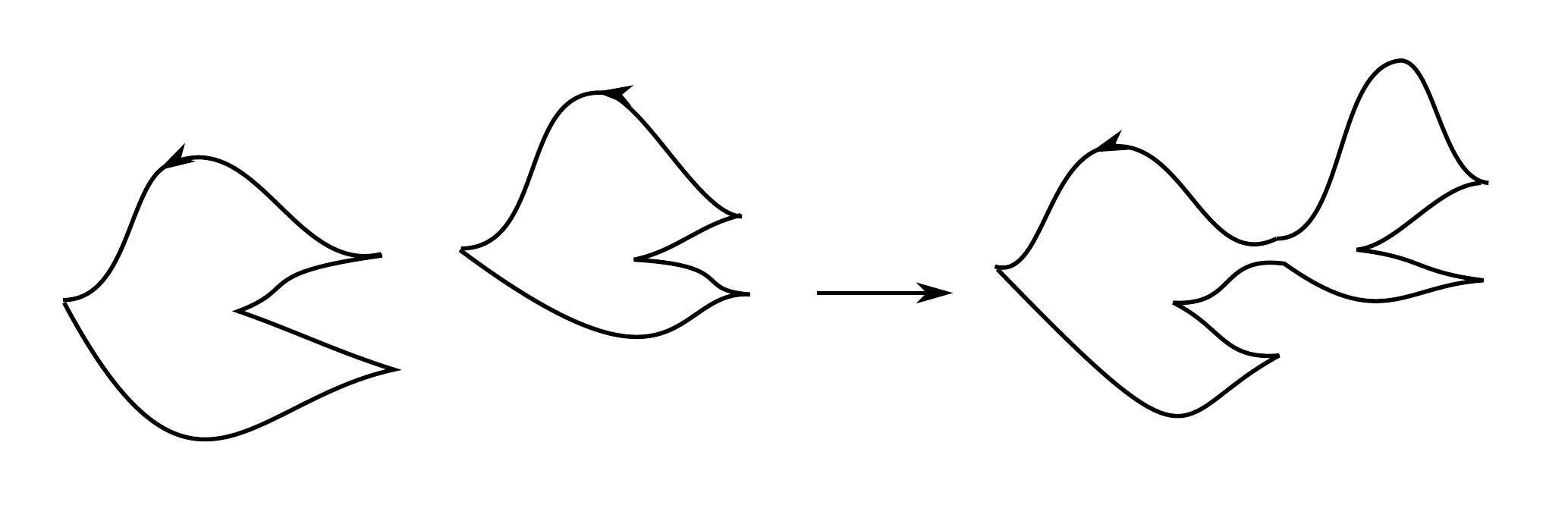
     \caption{Connected sum of $K(m,n)$ knots along cusps.}
\end{figure}
\\
\par
\textit{Step 3. }We construct a cobordism to $U_0$. After adding some stabilizations, we can suppose our knot is a $K(n,n)$ with $n\geq 1$. Our goal is to construct a cobordism to $U_0=K(1,1)$ and the proof will follow by induction if we prove that there is a cobordism from $K(n,n)$ to $K(n-1,n-1)$. In order to do so, we can perform an $H$ move on the top pair of cusps pointing inward, obtaining a $K(0,0)$ and a $K(n-1,n-1)$. Then, we can connect sum these two knots along the cusps, and obtain a $K(n-1,n-1)$.
\end{proof}
\newpage
The same ideas can be exploited to show the following easy lemma.
\begin{lemma}
Given a Legendrian knot $K_0$, there is an exact Lagrangian cobordism from its double stabilization with the same rotation number to itself.
\end{lemma}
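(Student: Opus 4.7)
The strategy is to construct, by the same methods as in the proof of Proposition 1, an explicit exact Lagrangian cobordism of genus one from the double stabilization $S_+S_-(K_0)$ (one positive and one negative stabilization, so that the rotation number is preserved) to $K_0$. The genus is forced by Chantraine's identity: $\mathrm{tb}(S_+S_-(K_0)) = \mathrm{tb}(K_0) - 2$, so any Lagrangian cobordism between them must have $\chi = -2$.

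First, by the Eliashberg--Gromov theorem cited above, every Legendrian isotopy is realized by an exact Lagrangian cylinder, so I may freely assume that the two stabilizations are performed side-by-side on a single smooth arc of the front projection of $K_0$. The local picture becomes a pair of adjacent zigzags of opposite sign, producing four cusps in a small window. Using further Legendrian $R_2$, $R_3$ moves and small isotopies, I reposition the four cusps into two pairs of facing cusps, each pair meeting the configuration required by the handle attachment move $H$ of Figure $5$ (no parallel tangent planes at coinciding $y_i$ coordinates, monotone Reeb chord lengths near the boundary). I then apply the $H$ move twice: the first splits the knot into $K_0$ connect-summed with a small auxiliary unknot, and the second connect-sums the auxiliary component away along a cusp, producing a knot Legendrian isotopic to $K_0$. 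This two-step pattern exactly mirrors Step $3$ in the proof of Proposition $1$, which reduces $K(n,n)$ to $K(n-1,n-1)$ by a pair of $H$ moves.

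Exactness is automatic: Legendrian-isotopy cylinders are exact, and the jet-space construction described just above Remark $2$ produces exact $H$-cobordisms; composition of exact Lagrangian cobordisms is exact. Each $H$ move contributes $-1$ to $\chi$, so the overall $\chi$ is $-2$, as needed. The main technical hurdle lies in checking that the four cusps can indeed be placed, by Legendrian isotopy alone, into a configuration meeting the precise hypotheses of the $H$ move; this amounts to a direct local computation in the explicit front model of two adjacent opposite-sign zigzags, and involves no essentially new idea beyond those already used in Proposition $1$.
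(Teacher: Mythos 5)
Your construction is essentially the paper's own proof: the paper's Figure $9$ realizes the double stabilization, performs an $H$ (pinch) move splitting off a small standard unknot, and then reabsorbs it by a connected sum along a cusp (another $H$ move), exactly the two-saddle, genus-one cobordism you describe, with exactness coming from the same Eliashberg--Gromov and jet-space constructions. So the proposal is correct and takes the same route; the only difference is that the paper exhibits the sequence of fronts explicitly in a figure rather than arguing that the cusps can be positioned for the $H$ moves.
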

\begin{proof}
See Figure $9$. Here the last move is a connected sum along a cusp as is Steps $2$ and $3$ in the proof of the previous proposition.
\end{proof}
\begin{figure}[here]
  \centering
   \def\svgwidth{0.8\textwidth}
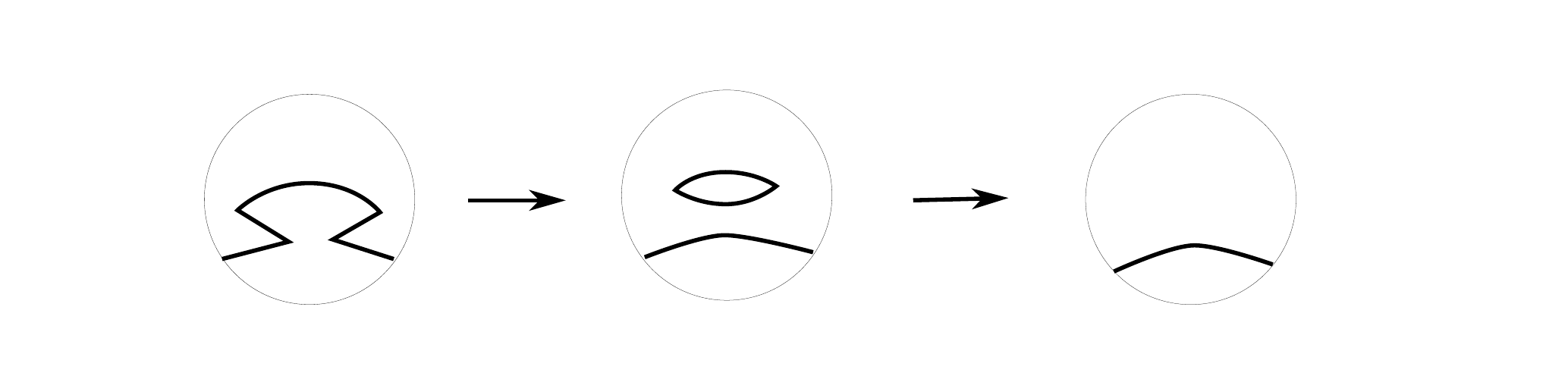
    \caption{A cobordism from the double stabilization.}
\end{figure}

\vspace{1cm}
\section{The combinatorial moves}
As observed in the introduction, in order to construct caps we necessarily have to leave the Legendrian world, and allow more general Lagrangian cobordisms in which the slices are not necessarily Legendrian knots. In this section we show how to construct these, and we first start defining the class of combinatorial objects we will be dealing with.

\begin{defn}
A \textit{Lagrangian diagram} $D$ is a regular projection of a smooth link $K\subset \Rt$ to the $xy$-plane, and the assignment to each domain of the complement of the projection of its area (with respect to the standard area form).
\end{defn}
For example, Figure $2$ (on the right) and Figure $3$ are Lagrangian diagrams of an unknot. It is important to note that the diagram $D$ has to be the projection of some link $K\subset \Rt$.
In order to develop a combinatorial model, we introduce an identification between Lagrangian diagrams as follows.
\begin{defn}
Given a Lagrangian diagram $D$, define $\Delta(D)$ to be the set of positive real numbers arising as the collection consisting of all areas in appearing in the diagram and all absolute values of differences between distinct areas. We say that another Lagrangian diagram $D'$ is \textit{commensurate} to $D$ if the two projections are isotopic as knot diagrams in $\mathbb{R}^2$, and the difference between the two areas assigned to a same domain is strictly smaller than any element in $\Delta(D)\cup\Delta(D')$.
\end{defn}
\begin{notation}
Indeed, we can extend Definition $1$ allowing for a domain to be assigned the number $0$, which means that its area is much smaller than every non-zero area, and every absolute value of the difference of two distinct areas.
\end{notation}
For example, the Lagrangian diagram of Figure $2$ is commensurate to the one on the left in Figure $10$, but not to the one on the right. 
\begin{figure}[here]
  \centering
   \def\svgwidth{0.8\textwidth}
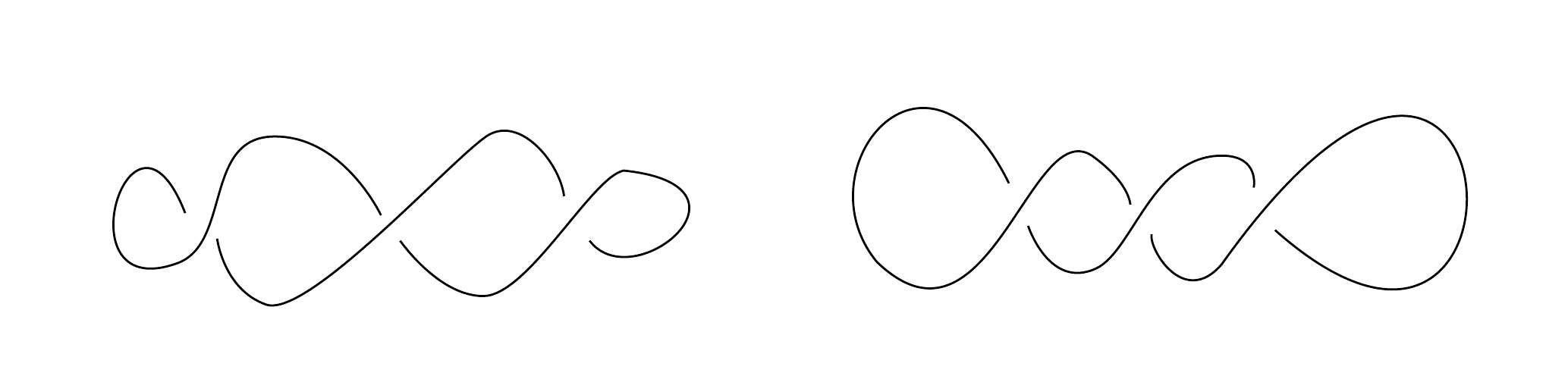
    \caption{Non equivalent Lagrangian diagrams.}
\end{figure}

We now state the main result of the section stating that some combinatorial moves on Lagrangian diagrams correspond to Lagrangian cobordisms between the respective knots.

\begin{prop}Any sequence of local moves as in Figure $11$ between Lagrangian diagrams $D_0$ and $D_1$ with $A,\delta>0$ can be realized by an orientable Lagrangian cobordism in the following sense. Suppose we have a link $K_0$ with Lagrangian diagram $D_0$. Then there exists a link $K_1$ with Lagrangian diagram commensurate to $D_1$ and an orientable Lagrangian $L$ properly embedded in $\Rt\times[0,\varepsilon]\subset(\Rf_+,\omega_{std})$ for some $\varepsilon>0$ such that
\begin{IEEEeqnarray*}{c}
L\cap \{t=0\}=K_0\subset\Rt\\
L\cap \{t=\varepsilon\}=K_1\subset\Rt.
\end{IEEEeqnarray*}
\end{prop}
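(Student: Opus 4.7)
The plan is to realize each of the moves in Figure 11 by an explicit model Lagrangian cobordism supported in a small box containing the region of the move and cylindrical over the ambient link outside. Since Lagrangian cobordisms compose (by concatenating in the $t$-direction and reparametrizing), stacking these local models realizes any finite sequence of moves, so it suffices to exhibit one model per move type and to control areas in the cylindrical complement.

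The technical heart is the parametrization principle: a smooth $1$-parameter family of immersions $\gamma_t\colon S^1 \to \Rt$ traces out a Lagrangian surface $L = \{(\gamma_t(s),t)\} \subset (\Rt\times[0,\varepsilon],\omega_{std})$ precisely when the pullback of $\omega_{std}=dx\wedge dy + dt\wedge dz$ vanishes. Computed coordinate-wise, this couples the evolution of the $z$-coordinate to the rate at which the Lagrangian projection sweeps area, so prescribing how the projection deforms together with how the region-areas change is tantamount to prescribing a Lagrangian lift, subject to a closedness condition that the combinatorial moves are designed to satisfy.

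For each move I would use a standard local model. Planar isotopies of the projection preserving all region-areas lift to Lagrangian cylinders via an ambient Hamiltonian flow on $\Rt$; Reidemeister-type crossing moves with area change controlled by $\delta$ arise from compactly supported Hamiltonians pushing one strand across another, with total swept area exactly $\delta$; topology-changing moves (the birth of a small circle of area $\approx A$, or a saddle/$1$-handle attachment absorbing a region of area $A$) are realized respectively by a standard Lagrangian disk cap and by the symmetric Lagrangian saddle modeled on $\{x^2-y^2=c(t)\}$ smoothed off outside a small neighborhood; and area-transfer moves are carried out by Hamiltonian flows that push a strand across a shared region, transporting the prescribed area between adjacent components of the complement.

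The main obstacle is bookkeeping rather than construction: each local model unavoidably perturbs the areas of the regions it touches, so the terminal slice $K_1$ has a diagram only \emph{commensurate} to $D_1$ rather than equal to it. Guaranteeing commensurability requires choosing the spatial support and intensity of each Hamiltonian small enough that every induced area perturbation is strictly less than every positive element of $\Delta(D_0)\cup\Delta(D_1)$ and of each intermediate diagram; since the sequence of moves is finite and each intermediate $\Delta$ is a finite set of positive reals, such a uniform smallness choice exists. Orientability of the total cobordism follows from the orientability of each local model together with the fact that the pieces are glued along oriented cylinders.
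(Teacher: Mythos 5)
Your overall framework is the same as the paper's: a family of immersed plane curves with constant signed area lifts to a Lagrangian in $\Rt\times[0,\varepsilon]$ because the vanishing of $dx\wedge dy+dt\wedge dz$ couples $\partial z/\partial\vartheta$ to the swept area (this is exactly the paper's key lemma), local models are stacked in the $t$-direction, and commensurability is obtained by making all perturbations small. However, there is a genuine gap: you never address embeddedness of the lift at the crossings. In this construction the $z$-coordinate along each slice is \emph{determined} (up to one constant per component) by the area-sweeping condition, so you cannot freely push ``one strand across another'' by a compactly supported Hamiltonian and assume the lifted surface stays embedded; you must verify that at every crossing the two strands acquire distinct $z$-values compatible with the indicated over/under data. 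This is precisely why the moves in Figure $11$ carry fixed crossing choices, why $R_0$ requires $A>0$ and is not reversible, and why the mirror version of $R_0$ produces an immersed Lagrangian with a double point. The paper handles this by arranging the integral of the $J$-function along the relevant arc to dominate the corresponding integral before the crossing, forcing the correct $z$-ordering; an argument that omits this check would ``prove'' the invalid moves as well, so it cannot be complete.

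Two of your local models are also off. The saddle $\{x^2-y^2=c(t)\}$ is a hypersurface, not a Lagrangian surface; the correct local model for the handle attachment is a genuinely Lagrangian one such as $\{t=z^2-y^2,\ x=2yz\}$ (or the front-projection construction in Section $1$), and gluing it into the ambient family requires matching the boundary arcs and the $z$-lift on an annular region, which takes an argument. And the ``birth of a small circle of area $\approx A$ capped by a standard Lagrangian disk'' is impossible here: for any Lagrangian surface, Stokes applied to $\lambda_{std}=x\,dy+(t+1)\,dz$ shows a slice appearing at a birth point must enclose zero signed area, which is why the relevant curves in this paper are figure-eight type diagrams like $U_1$ (two lobes of areas $+A$ and $-A$), not circles. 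Finally, orientability is not automatic from gluing oriented pieces; it is the reason $H_1$ and $H_2$ are only allowed with the orientations shown in Figure $11$, a constraint your argument should make explicit.
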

\begin{figure}[here]
  \centering
   \def\svgwidth{0.8\textwidth}
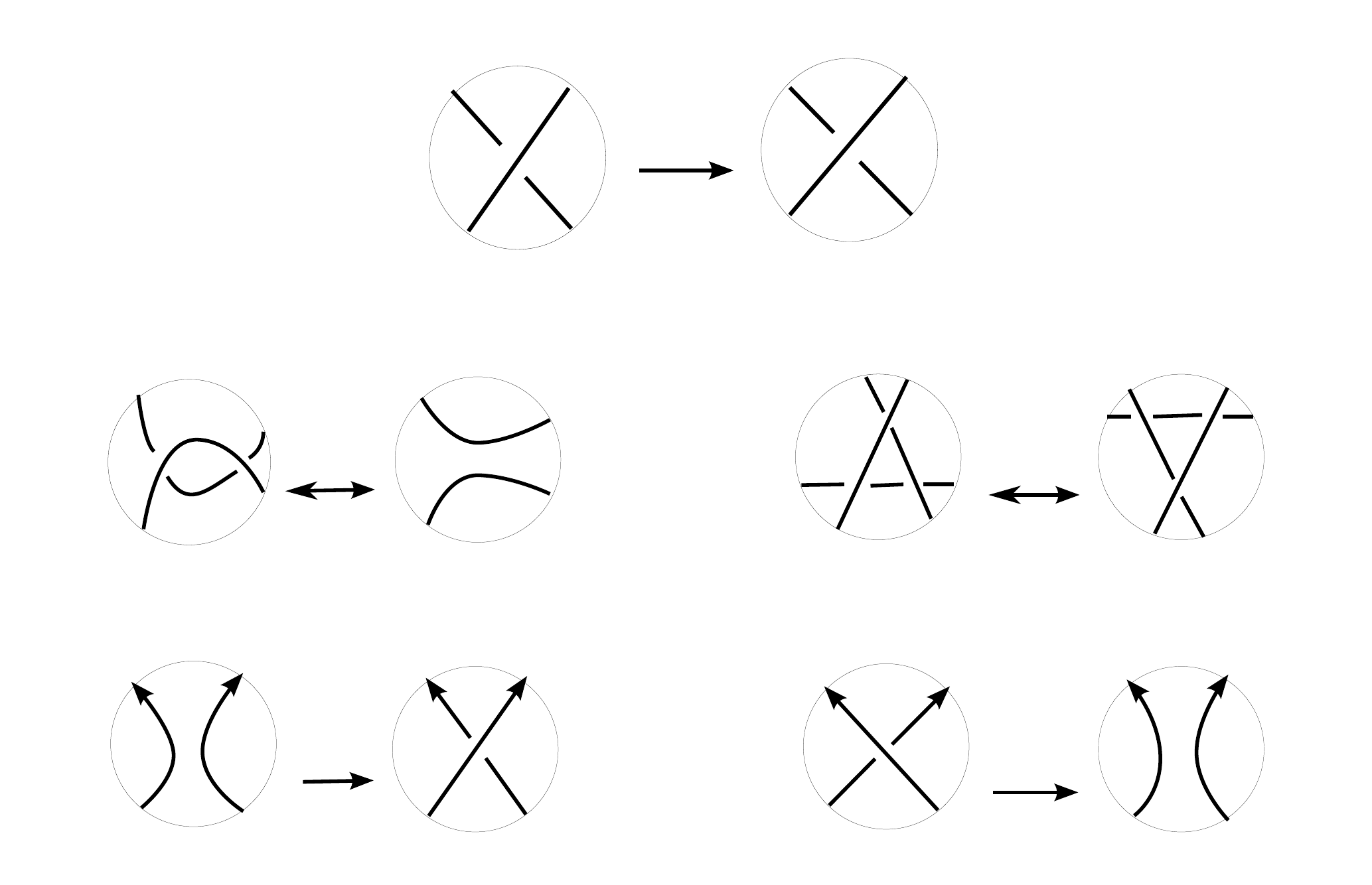
    \caption{Combinatorial moves on Lagrangian diagrams.}
\end{figure}

We spell out the meaning of Figure $11$ in more detail. If a domain is not assigned any number then its area remains unchanged when the move is performed. On the other hand, it will be clear from the construction that the actual areas of the domains in the projection do change, but this change can be made arbitrarily small (hence yielding a knot with a Lagrangian diagram commensurate to the one we want to obtain).
\par
The move $R_0$ (defined for $A>0$) is allowed only if the areas of the domains with label $-A$ are bigger than $A$. The $\delta>0$ in the diagrams for $R_2$ and $R_3$ indicates the necessity of the indicated domain to have area strictly bigger that the one labeled with $0$ (and the move can be performed in both cases also in the case that the symmetric domain is labeled with a $\delta$). Also, the moves $R_2$ and $R_3$ work with any choices of the crossings and are reversible (as the double arrows suggest).
On the other hand, the remaining moves can be performed only with the crossings shown in Figure $11$, and are not reversible. Notice that in the moves $H_1$ and $H_2$ we deal with oriented diagrams. This is done in order to construct orientable cobordisms.
\\
\par
The rest of the section is dedicated to the construction combinatorial moves described in Proposition $2$. The idea is to first construct a Lagrangian cobordism $L_0$ defined for some small $\varepsilon>0$ which is topologically a product
\begin{equation*}
L_0\cong K\times [0,\varepsilon]\subset \Rt\times [0,\varepsilon],
\end{equation*}
such that the Lagrangian diagram of $K\times \{\varepsilon\}$ is commensurate to the one of $K\times\{0\}=K_0$, and then modify such a cobordism in a small neighborhood of the crossing interested by the local move. The whole construction is based on the following key lemma which appears already in \cite{Sau} and describes a way to construct Lagrangian cobordisms starting from a family of immersed curves in the plane.

\begin{lemma}
Suppose we are given a smooth map
\begin{equation*}
\varphi(t,\vartheta)=(x(t,\vartheta), y(t, \vartheta)): [0,T]\times S^1\rightarrow \mathbb{R}^2
\end{equation*}
such that $\varphi(t,-):S^1\rightarrow \mathbb{R}^2$ is an immersion with total signed area $C$ fixed with respect to $t$. Then there exists $z: [0,T]\times S^1\rightarrow \mathbb{R}$ such that the map
\begin{IEEEeqnarray*}{c}
\tilde{\varphi}:[0,T]\times S^1\rightarrow \Rf_+ \\
(t,\vartheta)\mapsto\left(x(t,\vartheta),y(t,\vartheta),t,z(t,\vartheta)\right)
\end{IEEEeqnarray*}
is a Lagrangian immersion. Furthermore, the map $\tilde{\varphi}$ is an embedding if and only if whenever $\varphi(t,\vartheta)=\varphi(t,\vartheta')$, $z(t,\vartheta)\neq z(t,\vartheta')$.
\end{lemma}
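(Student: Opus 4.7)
The strategy is to translate the Lagrangian condition into a pointwise equation for $z_\vartheta$ along each circle $\{t\}\times S^1$, and then to recognize the area-preservation hypothesis as precisely the integrability obstruction that allows this equation to be solved with a single-valued $z$ on $S^1$. Pulling back the symplectic form under $\tilde{\varphi}$ and using $\tilde{\varphi}^*dt=dt$, I compute
\[\tilde{\varphi}^*\omega_{std}=\tilde{\varphi}^*(dx\wedge dy)+\tilde{\varphi}^*(dt\wedge dz)=\bigl(x_ty_\vartheta-x_\vartheta y_t+z_\vartheta\bigr)\,dt\wedge d\vartheta.\]
So $\tilde{\varphi}$ is a Lagrangian map if and only if
\[z_\vartheta(t,\vartheta)=x_\vartheta(t,\vartheta)\,y_t(t,\vartheta)-x_t(t,\vartheta)\,y_\vartheta(t,\vartheta)\]
for all $(t,\vartheta)\in[0,T]\times S^1$. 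For each fixed $t$ this is a first-order ODE in $\vartheta$ which admits a smooth single-valued antiderivative on $S^1$ exactly when its right-hand side has zero total integral around the circle.

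This is where the hypothesis on $C$ enters. Writing the signed area as $C=\tfrac12\int_{S^1}(xy_\vartheta-yx_\vartheta)\,d\vartheta$, differentiating in $t$, and integrating by parts in $\vartheta$ using periodicity, I obtain
\[0=\frac{dC}{dt}=\int_{S^1}\bigl(x_ty_\vartheta-x_\vartheta y_t\bigr)\,d\vartheta,\]
which is precisely the required vanishing. I then set
\[z(t,\vartheta):=\int_0^\vartheta\bigl(x_\vartheta y_t-x_ty_\vartheta\bigr)(t,s)\,ds,\]
possibly modified by an arbitrary smooth function of $t$ alone; this yields a smooth $z:[0,T]\times S^1\to\mathbb{R}$ for which $\tilde{\varphi}^*\omega_{std}=0$.

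It remains to verify the immersion and embedding statements, which are essentially formal. The differential of $\tilde{\varphi}$ sends $\partial_t$ to a vector whose $t$-component is $1$ and $\partial_\vartheta$ to a vector whose $t$-component is $0$; the latter has $(x,y)$-components $(x_\vartheta,y_\vartheta)\ne(0,0)$ because $\varphi(t,\cdot)$ is an immersion, so $d\tilde{\varphi}$ is of rank two everywhere. For injectivity, two points of $[0,T]\times S^1$ with the same image must have the same $t$, and equality of the remaining coordinates is exactly the condition that $\varphi(t,\vartheta)=\varphi(t,\vartheta')$ and $z(t,\vartheta)=z(t,\vartheta')$, giving the stated embedding criterion. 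The only substantive step in the whole argument is identifying the area-preservation hypothesis with the zero-integral condition for $z_\vartheta$; everything else is a short computation in standard coordinates.
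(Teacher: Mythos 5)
Your proposal is correct and follows essentially the same route as the paper: pull back $\omega_{std}$ to reduce the Lagrangian condition to $z_\vartheta=x_\vartheta y_t-x_t y_\vartheta$, observe that constancy of the signed area forces $\int_{S^1}(x_t y_\vartheta-x_\vartheta y_t)\,d\vartheta=0$ so that $z$ can be defined by fiberwise integration, and then check the immersion and injectivity claims directly. The only cosmetic difference is that you differentiate $\tfrac12\int_{S^1}(xy_\vartheta-yx_\vartheta)\,d\vartheta$ and integrate by parts, whereas the paper computes $\tfrac{d}{dt}\int_{S^1}\varphi_t^*(x\,dy)$ via the Lie derivative and Cartan's formula; the two computations are equivalent.
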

\begin{proof}
By imposing $\mathrm{Im}\tilde{\varphi}$ to be isotropic one obtains
\begin{equation*}
0=\tilde{\varphi}^*\omega(\partial /\partial t,\partial/\partial\vartheta)=\frac{\partial x}{\partial{t}}\frac{\partial y}{\partial \vartheta}-\frac{\partial x}{\partial{\vartheta}}\frac{\partial y}{\partial t}+\frac{\partial z}{\partial \vartheta}.
\end{equation*}
Setting
\begin{equation*}
J(t,\vartheta)=-\frac{\partial x}{\partial{t}}\frac{\partial y}{\partial \vartheta}+\frac{\partial x}{\partial{\vartheta}}\frac{\partial y}{\partial t},
\end{equation*}
 if for every $t$ one has
\begin{equation*}
\int_{S^1}J(s,\vartheta)d\vartheta=0
\end{equation*}
then one can define a map $\tilde{\varphi}$ with isotropic image by setting
\begin{equation*}
z(t,\vartheta)=z(t,\vartheta_0)+\int_{\vartheta_0}^{\vartheta}J(t,\vartheta)d\vartheta
\end{equation*}
where $\vartheta_0\in S^1$ is a basepoint and $z(-,\vartheta_0)$ is any fixed function. On the other hand
\begin{multline*}
0=\frac{d}{dt}C=\frac{d}{dt}\int_{S^1}\varphi_t^*(xdy)=\int_{S^1}\mathcal{L}_{\partial/\partial t}\varphi^*(xdy)=\\ =\int_{S^1}d(\varphi^*i_{\partial/\partial t} xdy)+ \int_{S^1}i_{\partial/\partial t} dx\wedge dy= \int_{S^1}\frac{\partial x}{\partial{t}}\frac{\partial y}{\partial \vartheta}-\frac{\partial x}{\partial{\vartheta}}\frac{\partial y}{\partial t}d\vartheta.
\end{multline*}
From the shape of $d\tilde{\varphi}$ one sees that if each curve $\varphi_t$ is immersed then $\tilde{\varphi}$ will be an immersion. Finally, the embeddedness criterion is straightforward.
\end{proof}

\begin{proof}[Proof of Proposition 2] As previously discussed, we first construct a Lagrangian cobordism $L_0$ between the given knot $K_0$ and a new knot with Lagrangian diagram commensurate to $D_0$. This cobordism will be defined for some small $\varepsilon>0$ and will be topologically a product. Indeed, the (parametrized) knot $K_0$ comes with the function
\begin{equation*}
J_0(\vartheta)=dz/d\vartheta.
\end{equation*}
One can find a family of immersions $\varphi: [0,1]\times S^1\rightarrow \mathbb{R}^2$ with constant area such that
\begin{itemize}
\item the restriction $\varphi(0,-)$ is the parametrization of $K_0$;
\item for the function $J$ defined in Lemma $2$, $J(0,-)=J_0$.
\end{itemize}
Notice that the function $J$ can be interpreted as the scalar product between $\partial\varphi/\partial\vartheta$ and the $\pi/2$ clockwise rotation of $\partial\varphi/\partial s$ so determines only the component of $\partial \varphi/\partial t$ normal to the curve, hence one can actually arrange the total change of area to be zero.
\par
After restricting to a smaller time interval $[0,\varepsilon]$, such a map induces via the construction of Lemma $2$ an embedded Lagrangian cobordism with the property that all the slices with $s$ constant have Lagrangian diagram commensurate to $D_0$. In order to prove Proposition $2$, we modify this family of immersions $\varphi$ in a neighborhood of the crossing where each local move is performed as follows.
\\
\par
\textit{Move $R_0$. }This move can be achieved using a family of immersions $\psi$ as in Figure $12$. 
\begin{figure}[here]
  \centering
   \def\svgwidth{0.8\textwidth}
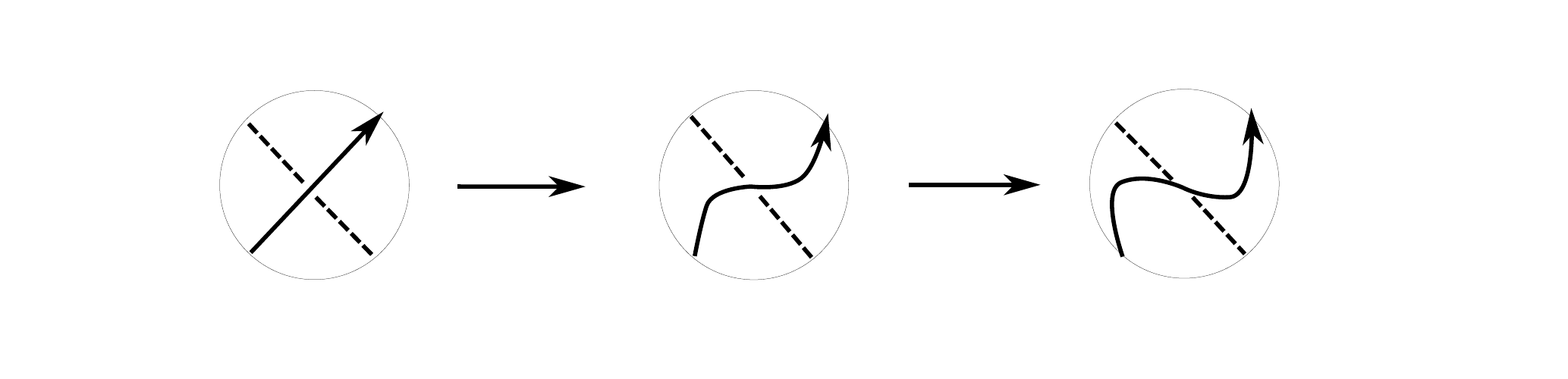
     \caption{Constructing the move $R_0$.}
\end{figure}

To spell out the meaning of the picture, it refers to a neighborhood of the crossing where the areas of the domains on which the move decreases the area is bigger that $A$. In agreement with our convention, we schematically represent the map $\varphi$ we start with as the constant diagram on the left of the picture. We then modify such a map to a new family $\psi=\{\psi_t\}$ shown in the picture. Such a family agrees with $\varphi_t$ near the boundary of the neighborhood for $t\in[0,\varepsilon]$ and for $t\geq0$ small in the whole neighborhood. It is clear that such a family $\psi_t$ can be chosen so that the total area remains constant. Furthermore, we can arrange that the family $\psi$ still defines an embedded Lagrangian cobordism. This is because our family $\psi$ can be chosen so that it respects the previous requirements and is such that the integral of its $J$ function along the solid arc is bigger for every $t$ than the corresponding integral for $\varphi$ before the crossing. This implies that at the crossing the $z$ coordinate of the new induced knot will be bigger that the starting one, hence in particular that the cobordism is embedded. Finally, it is clear that the family can also be chosen so that the area of the domains changes by $A$.
Notice that the specular construction of the similar move with $A<0$ will in general give rise to a Lagrangian with a self-intersection.
\\
\par
\textit{Move $R_2$. }The construction is achieved by the family of immersions depicted in Figure $13$ (here the orientations are introduced only to describe the family). In particular, the part of the of the arcs before the first crossing is pulled to the left, while a neighborhood of the arcs connecting the two crossing points are pulled down (for the lower arc) and up (for the upper arc). Such a family is chosen as before so that the total area remains constant  and coincides with $\varphi_t$ near the boundary of the neighborhood for $t\in[0,\varepsilon]$ and for $t\geq0$ small in the whole neighborhood. We are assured that such a family exists because the area enclosed by the crossing is much smaller that the area of the domain towards which we are pulling the arcs. The embeddedness of the induced cobordism can be arranged exactly as in the construction of move the $R_0$. The inverse move is constructed using an analogous family.
\begin{figure}[here]
  \centering
   \def\svgwidth{0.8\textwidth}
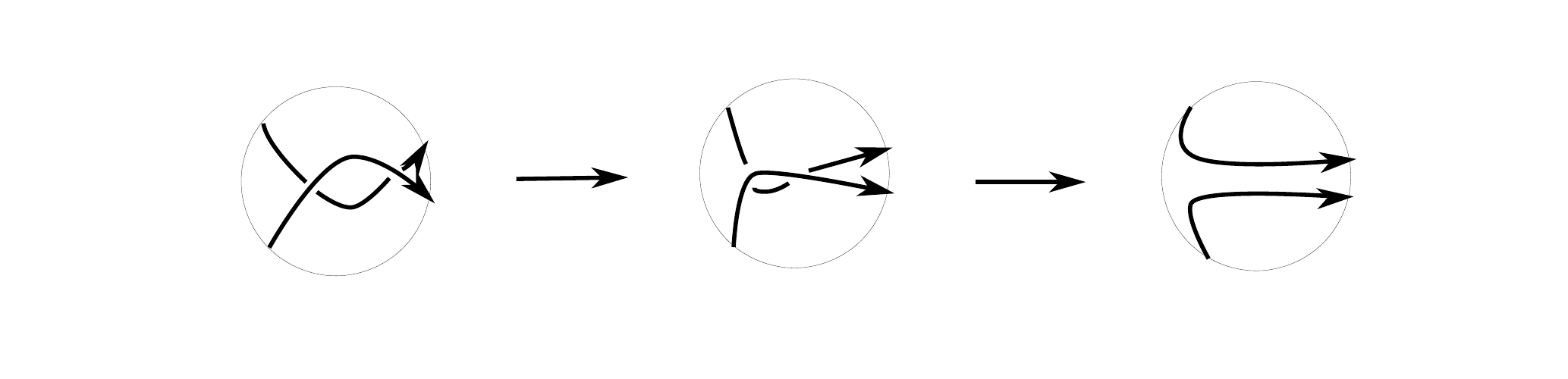
     \caption{Constructing the move $R_2$.}
\end{figure}
\\
\par
\textit{Move $R_3$. }The construction of this move is totally analogous to the previous ones.
\\
\par
\textit{Move $H1$. }The handle attachment moves cannot be achieved by the same construction, as the topology of the cobordism changes. In order to construct them, we can rely on the local model shown in Figure $14$ provided by a neighborhood of the Lagrangian submanifold
\begin{equation*}
\{(x,y,t,z)\mid t=z^2-y^2, x=2yz\}\subset \Rf
\end{equation*}
around the origin. Alternatively, one can also rely on the handle attaching moves constructed in Section $1$ using front projections and jet spaces.
\begin{figure}[here]
  \centering
 \def\svgwidth{0.8\textwidth}
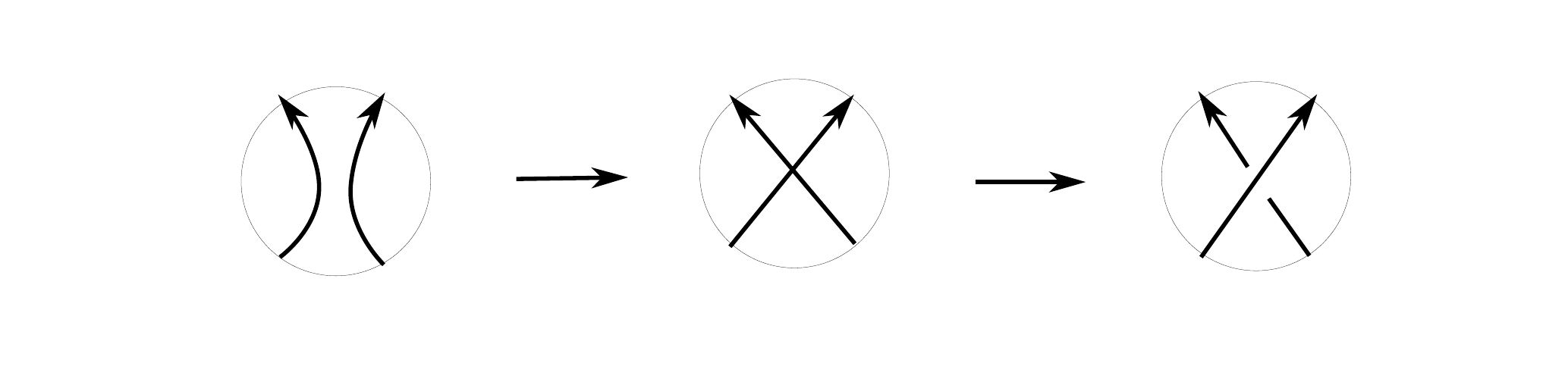
     \caption{The local model for a handle attachment.}
\end{figure}
As before, we describe how the local move is performed in a small neighborhood of the crossing affected by the local move. Suppose we start with a family of immersions $\{\varphi_t\}$ defined for a time $\varepsilon>0$. After restricting to smaller arcs, we can obtain a local model for the handle attachment $\tilde{L}$ that can be realized in a time $\varepsilon/2$ (indeed, we just need to restrict  the Lagrangian we start with to a smaller time interval). Pick a neighborhood $\mathcal{U}$ of the crossing interested by the local move. Then, in time $\varepsilon/2$ we can construct an isotopy of the arcs inside this neighborhood so that the induced arcs in $\Rt$ agree with the lower boundary of our local attaching model $\tilde{L}$ in a smaller neighborhood $\mathcal{U}'$. Clearly such a family can be chosen so that it preserves the area and coincides with $\{\varphi_t\}$ near the boundary of $\mathcal{U}$. We just then attach $\tilde{L}$ inside $\mathcal{U}'$ and modify the family of immersions for $t\in[\varepsilon/2,\varepsilon]$ in $\mathcal{U}\setminus \mathcal{U}'$ so that together they define a smooth embedded Lagrangian for $t\in[0,\varepsilon]$.
\\
\par
\textit{Move $H_2$. }The move $H_2$ can be obtained as a combination of the previous moves as in Figure $15$. In particular, one performs a $H_1$ move in a small neighborhood above the crossing such that the resulting pair of crossing encloses a very small area and then an $R_2$ move.
\begin{figure}[here]
  \centering
 \def\svgwidth{0.8\textwidth}
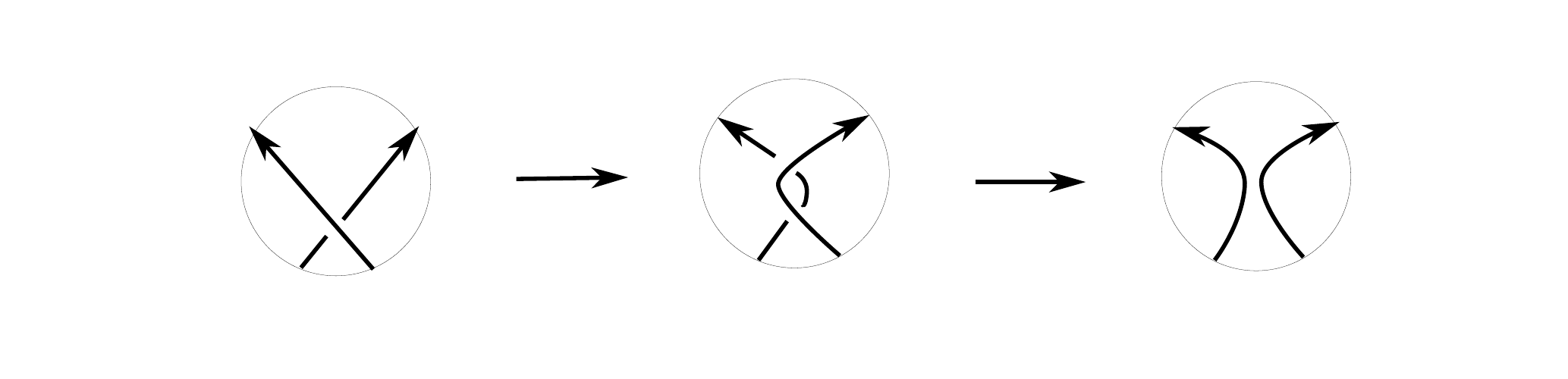
     \caption{Constructing the move $H_2$.}
\end{figure}

Finally, it is clear that given a sequence of moves each Lagrangian cobordism can be constructed so that the total result is smooth. This concludes the proof.
\end{proof}

We conclude this section by constructing caps for the knots with Lagrangian diagram $U_1$ and fillings for the knots with the mirror Lagrangian diagram (i.e. the  Lagrangian diagram with the opposite crossing, which is also the Lagrangian projection of the standard Legendrian unknot).
\begin{lemma}
Any knot with Lagrangian diagram commensurate to $U_1$ admits a Lagrangian cap which is topologically a disk. Similarly, any knot with Lagrangian diagram commensurate to the mirror of $U_1$ admits a Lagrangian filling which is topologically a disk.
\end{lemma}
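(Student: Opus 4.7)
The plan is to exhibit an explicit local Lagrangian disk realizing the apex of the cap, and then use the product-cobordism construction from the proof of Proposition 2 to connect the given knot to the boundary of this model. The observation that makes this possible is that in $U_1$ the two lobes of equal area $A$ are traversed with opposite orientations, so the total signed area $\oint x\,dy$ along the slice curve vanishes; by Lemma 2 this is precisely the necessary condition for the $xy$-projection of the Lagrangian slices to be allowed to collapse to a point, so no obstruction rules out a disk cap.

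For the local model I would take
\[
L_{\mathrm{cap}} := \{(x,y,t,z)\in\Rf_+ : t = T - y^2 - z^2,\ x = -2yz\}
\]
for a fixed $T>0$, parametrized by $(y,z)$ on the disk $y^2+z^2\le T$. A direct computation shows that $dx\wedge dy + dt\wedge dz$ vanishes on $L_{\mathrm{cap}}$, so it is Lagrangian, and it is a smoothly embedded topological disk with a unique maximum of $t$ at the apex $(0,0,T,0)$. Its boundary at $\{t=0\}$ is the curve $(x,y,z) = (-T\sin 2\theta,\ \sqrt{T}\cos\theta,\ \sqrt{T}\sin\theta)$ for $\theta\in[0,2\pi]$, whose $xy$-projection is a figure-eight with two lobes of signed area $\pm\tfrac{4}{3}T^{3/2}$ traced in opposite senses, and whose crossing at $(0,0)$ has its two branches at heights $z = \pm\sqrt{T}$. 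Choosing $T=(3A/4)^{2/3}$ makes the unsigned lobe areas equal $A$; after possibly reflecting the model to match the specific crossing type prescribed by $U_1$, $\partial L_{\mathrm{cap}}$ has a Lagrangian diagram commensurate to $U_1$.

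To connect the given $K$ to $\partial L_{\mathrm{cap}}$, I would apply the product-cobordism construction from the proof of Proposition 2. Since $K$ and $\partial L_{\mathrm{cap}}$ have commensurate Lagrangian diagrams, there exists an area-preserving family of plane immersions $\varphi:[0,\varepsilon]\times S^1 \to \mathbb{R}^2$ interpolating between their $xy$-projections. Lemma 2, combined with the freedom in the base function $z(\cdot,\vartheta_0)$ and in the choice of $\partial\varphi/\partial t$ at $t=\varepsilon$, lifts $\varphi$ to a smoothly embedded Lagrangian annulus $L_0 \subset \{0\le t \le\varepsilon\}$ whose lower boundary is $K$ and whose upper boundary agrees with a translate of $\partial L_{\mathrm{cap}}$. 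Gluing $L_0$ to the translated $L_{\mathrm{cap}}$ across $\{t=\varepsilon\}$ yields the desired cap, topologically a disk. For the mirror statement, the analogous Lagrangian
\[
L_{\mathrm{fill}} := \{t = y^2 + z^2 - T,\ x = 2yz\} \cap \{t\le 0\}
\]
is a disk in the negative symplectization whose boundary at $\{t=0\}$ projects to the mirror figure-eight of $U_1$, and which corresponds via the standard symplectomorphism to the well-known Lagrangian disk filling of the standard Legendrian unknot in $B^4$; the same bridging argument completes this case.

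The main technical obstacle is arranging $C^\infty$ smoothness of the glued surface at $\{t=\varepsilon\}$. I would handle this by choosing the family $\varphi$ to coincide with the $xy$-projections of the slices of $L_{\mathrm{cap}}$ (translated into $\{t\le\varepsilon\}$) in a one-sided neighborhood of $t=\varepsilon$, and matching the base function $z(\cdot,\vartheta_0)$ to all orders there; since the area-preservation constraint on $\varphi$ only fixes the total signed area of each $\varphi_t$, this local-matching requirement is compatible with it, and the embeddedness of the resulting surface is controlled exactly as in the moves $R_0$--$R_3$ of Proposition 2.
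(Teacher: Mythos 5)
Your proposal is correct and follows essentially the same route as the paper: an explicit Lagrangian model disk whose constant-$t$ slices project to figure-eight curves of vanishing signed area, capped off at an apex and joined to the given knot by a product cobordism built from the immersion-family construction of Lemma 2 (as in the proof of Proposition 2), with the filling obtained from the cap by a sign change. The only difference is cosmetic: the paper writes its model disk in polar coordinates as $\psi(r,\vartheta)=\left(r\cos\vartheta,\ r\sin 2\vartheta,\ -2r,\ r(\sin\vartheta-\tfrac{1}{3}\sin^{3}\vartheta)\right)$ (suitably translated), whereas you use the graph $t=T-y^{2}-z^{2}$, $x=-2yz$ over the $(y,z)$-disk, which serves the same purpose.
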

\begin{proof}
In order to construct a cap we simply rely on a suitable translation of the Lagrangian open disk parametrized in polar coordinates by
\begin{IEEEeqnarray*}{c}
\psi:\mathbb{R}^2\rightarrow \Rf \\
(r,\vartheta)\mapsto \left(r\cos\vartheta, r\sin2\vartheta, -2r, r(\sin\vartheta-\frac{1}{3}\sin^3\vartheta)\right).
\end{IEEEeqnarray*}
Indeed, using the same techniques as in the proof of Proposition $2$, we can construct a Lagrangian cobordism (which is topologically a cylinder) from the given knot to (a suitable translation) of one of the curves $\psi(r_0,-)$, and then cap the latter using the image via $\psi$ of the disk of radius $r_0$. The construction of the filling is analogous, as one can use the map obtained from $\psi$ by changing sign to the last two coordinates.
\end{proof}

\vspace{1cm}
\section{An exactness criterion}
So far, we have constructed embedded Lagrangians which are not necessarily exact, and it is well known that the last condition is a very restrictive one. For instance it is easy to construct non-exact closed embedded Lagrangians in $(\Rf,\omega_{std})$, while a celebrated result by Gromov (\cite{Gro}) tells us that there cannot be any exact closed embedded Lagrangian. Indeed, to deal with exactness there are some some subtle global aspects to be considered. First, we introduce a definition.
\begin{defn}
Given a Lagrangian cobordism $L\subset\Rf_+$, we define its \textit{negative boundary} to be $K_0=L\cap\{ t=0\}$. If the negative boundary $K_0$ of $L$ is Legendrian we say that $L$ is \textit{exact relative to its negative boundary} if $\lambda_{std}\lvert_L$ has a primitive that vanishes on $K_0$.
\end{defn}
Clearly, $L$ is exact relative to its negative boundary if and only if the integral of $\lambda$ is zero on every closed curve and on every embedded arc with both endpoints on the negative boundary $K_0$. Furthermore, in the case $L$ is a cap of $K_0$ then $L$ is exact in the usual sense. The primitive
\begin{equation*}
\lambda_{std}=xdy+(t+1)dz
\end{equation*}
is particularly nice because its integral along a curve with constant $t$ coordinate is exactly the area enclosed by its projection to the $xy$-plane. For example, the sequence of moves in Figure $16$ corresponds to a Lagrangian torus which is manifestly non-exact, and is indeed one of the standard Clifford tori. Here, the dashed arrow on the left indicates the standard filling constructed in Lemma $3$, and is followed by an $H_2$ move, an $H_1$ move and finally one uses Lemma $3$ again to construct the cap.
\begin{figure}[here]
  \centering
 \def\svgwidth{0.8\textwidth}
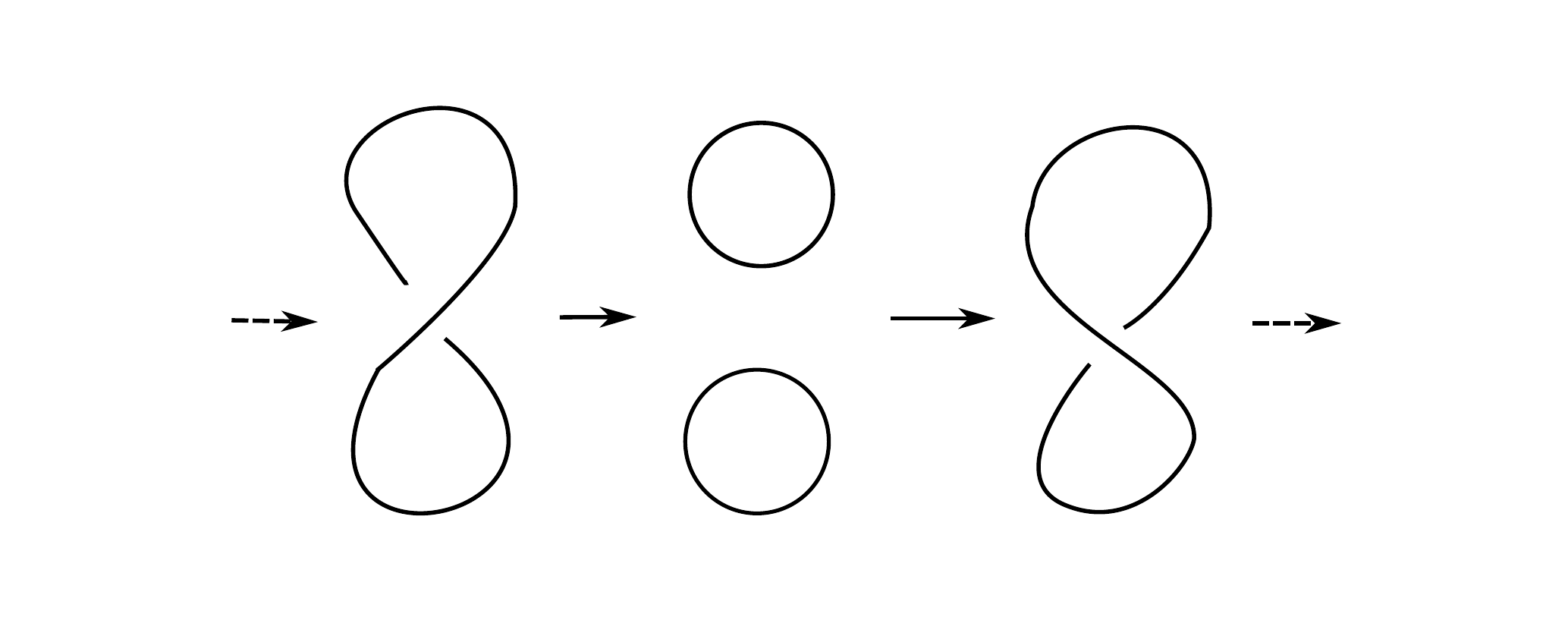
     \caption{A non-exact torus in $\mathbb{R}^4$.}
\end{figure}

Before stating our exactness criterion, we first  introduce a simple definition.
\begin{defn}
Given a projection of a link $K$, we say that two components of the link $K_1,K_2$ are \textit{vertically split} if the image of the projection can be covered with two disjoint disks $\Delta_1,\Delta_2$ with $K_i\subset\Delta_i$ for $i=1,2$.
\end{defn}

\begin{prop}
Suppose we are given a Legendrian link $K_0$ with Lagrangian diagram $D_0$ and a sequence of Lagrangian diagrams related by combinatorial moves starting at $D_0$ such that
\begin{itemize}
\item at each stage every component of the link has total area zero;
\item if a handle attachment move which merges two components is performed, then these components are vertically split for the projection to the $xy$-plane.
\end{itemize}
Then the sequence of combinatorial moves can be realized by a Lagrangian cobordism which is exact relative to its negative boundary $K_0$.
\end{prop}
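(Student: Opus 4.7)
I would argue by induction on the number of combinatorial moves in the sequence, constructing at each stage a primitive $f^{(i)} : L^{(i)} \to \mathbb{R}$ of $\lambda_{std}|_{L^{(i)}}$ vanishing on $K_0$. The base case $L^{(0)} = K_0$ is immediate because $\lambda_{std}$ vanishes along any Legendrian submanifold. For the inductive step, write $L^{(i+1)} = L^{(i)} \cup M$, where $M$ is the piece of Lagrangian cobordism added by the $(i+1)$-th move (built explicitly in the proof of Proposition $2$), with $\partial_- M \subset K^{(i)}$; the task is to extend $f^{(i)}$ across $M$.

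The first observation is that $\lambda_{std}|_M$ is itself exact on $M$. Since $M$ is Lagrangian, $\lambda_{std}|_M$ is closed, and $H_1(M;\mathbb{R})$ is generated (modulo boundary classes) by loops parallel to the components of $\partial M$. For a component $C$ of some $K^{(j)}$ sitting in the $t$-slice at time $t$,
\begin{equation*}
\int_C \lambda_{std} = \int_C x\,dy + (t+1)\int_C dz = \int_C x\,dy,
\end{equation*}
which equals the signed area enclosed by the projection of $C$ to the $xy$-plane, and hence vanishes by condition $1$ at stage $j$. So $\lambda_{std}|_M$ admits a primitive $g$ on $M$.

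For the product-type moves ($R_0, R_2, R_3$) and for any handle move that splits one component into two, $\partial_- M$ is connected, so $g - f^{(i)}$ on $\partial_- M$ is a single additive constant and can be absorbed into the choice of $g$, completing the extension. The genuinely new situation is a handle move that merges two components $K_1^{(i)}, K_2^{(i)}$ into one: then $\partial_- M$ has two components, and extendability is equivalent to the identity
\begin{equation*}
f^{(i)}(p_2) - f^{(i)}(p_1) = \int_\alpha \lambda_{std}
\end{equation*}
for some arc $\alpha \subset M$ from $p_1 \in K_1^{(i)}$ to $p_2 \in K_2^{(i)}$, where independence of $\alpha$, $p_1$, $p_2$ once more reduces to the vanishing of $\lambda_{std}$-integrals on the bottom circles.

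The main obstacle is this last identity, and here the vertical splitness hypothesis enters. Because $K_1^{(i)}$ and $K_2^{(i)}$ project to disjoint topological disks in the $xy$-plane, the merging handle can be realized by a translate of the local saddle model of Figure $14$ centered so that the natural arc $\alpha_0$ traversing the saddle is symmetric with respect to the involution $(y,z)\mapsto(-y,z)$. A direct computation in this parametrization shows $\int_{\alpha_0}\lambda_{std} = 0$, while vertical splitness leaves enough room to isotope $L^{(i)}$ near $K_1^{(i)}$ and $K_2^{(i)}$ (keeping areas fixed, in the spirit of Lemma $2$) so that the attaching endpoints $p_1, p_2$ satisfy $f^{(i)}(p_1) = f^{(i)}(p_2)$; the freedom to do so relies on the zero-area condition on each $K_j^{(i)}$, which renders $f^{(i)}|_{K_j^{(i)}}$ a single-valued mean-zero function on a circle whose range therefore contains $0$. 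This closes the induction.
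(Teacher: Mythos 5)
Your overall framework (induct over the moves, extend a primitive vanishing on $K_0$ piece by piece, kill the constant-$t$ boundary circles with the area-zero hypothesis, and isolate the merging handle as the only real obstruction) parallels the paper's reduction to the curves $\alpha_i,\beta_i$ and arcs $\gamma_i$, and it is fine up to the merging case. But the resolution of that case has a genuine gap. The assertion that the zero-area condition renders $f^{(i)}\lvert_{K_j^{(i)}}$ a \emph{mean-zero} function whose range contains $0$ is unjustified: zero enclosed area only makes the restriction single-valued; its additive level is the integral of $\lambda_{std}$ along a path through the cobordism already built, and nothing in the hypotheses pins it down. Indeed the paper stresses the opposite: the construction of Proposition $2$ carries a gauge freedom, and the quantity $f^{(i)}(p_2)-f^{(i)}(p_1)=\int_{\beta_i}\lambda_{std}$ can be made \emph{arbitrary}, which is exactly how the non-exact torus of Figure $17$ (all slice areas zero) arises. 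Moreover, even if the two ranges happened to overlap, you are not free to slide the attaching points: the Lagrangian projection of the saddle region is prescribed by the diagram, so the mismatch cannot be removed by choosing $p_1,p_2$ on the circles, and your appeal to a symmetric arc $\alpha_0$ in the local model with $\int_{\alpha_0}\lambda_{std}=0$ does not address it.

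What is actually needed, and what the paper does, is an active modification of the cobordism in the vertical direction: during the cylindrical stretch $[0,\varepsilon/2]$ before the saddle is glued, re-choose the free base-height function $z(t,\vartheta_0)$ from Lemma $2$; the term $(t+1)z'(t,\vartheta_0)$ in $\int_\gamma\lambda_{std}=\int_0^{\varepsilon/2}\bigl[x(t)y'(t)+(t+1)z'(t,\vartheta_0)\bigr]dt$ lets you add any prescribed constant to the integral along $\beta_i$ (and similarly along the arcs $\gamma_i$) without changing the Lagrangian projections. The vertical-splitness hypothesis is then used for embeddedness, not merely for ``room to isotope'': shifting the $z$-coordinates of one merging component forces a corresponding shift of every strand projecting into the same disk, and disjointness of the two disks is what guarantees no self-intersections are created. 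Your proposal gestures at an isotopy ``in the spirit of Lemma 2'' but never supplies this mechanism, so the inductive step at a merging handle --- the heart of the proposition --- is not established.
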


The conditions on the sequence of diagrams imposed by this proposition are rather strict, but they are general enough to be applied to our case. On the other hand, there are many subtleties that come in play when one tries to generalize this result. For example, the sequence of moves in Figure $17$ corresponds to a non-exact Lagrangian torus in $\mathbb{R}^4$ such that all the slices encloses area zero. More in detail, after the usual filling provided by Lemma $3$ one performs an $R_2$ move, an $R_0$ move, an $H_1$ move, an $R_2$ move, an $H_2$ move, an $R_0$ move, an $R_2$ move, an $R_0$ move, an $R_2$ move and finally one uses the cap from Lemma $3$. It is easy to check that all these moves are compatible with the orientation of the starting unknot.
\begin{figure}[here]
  \centering
 \def\svgwidth{0.8\textwidth}
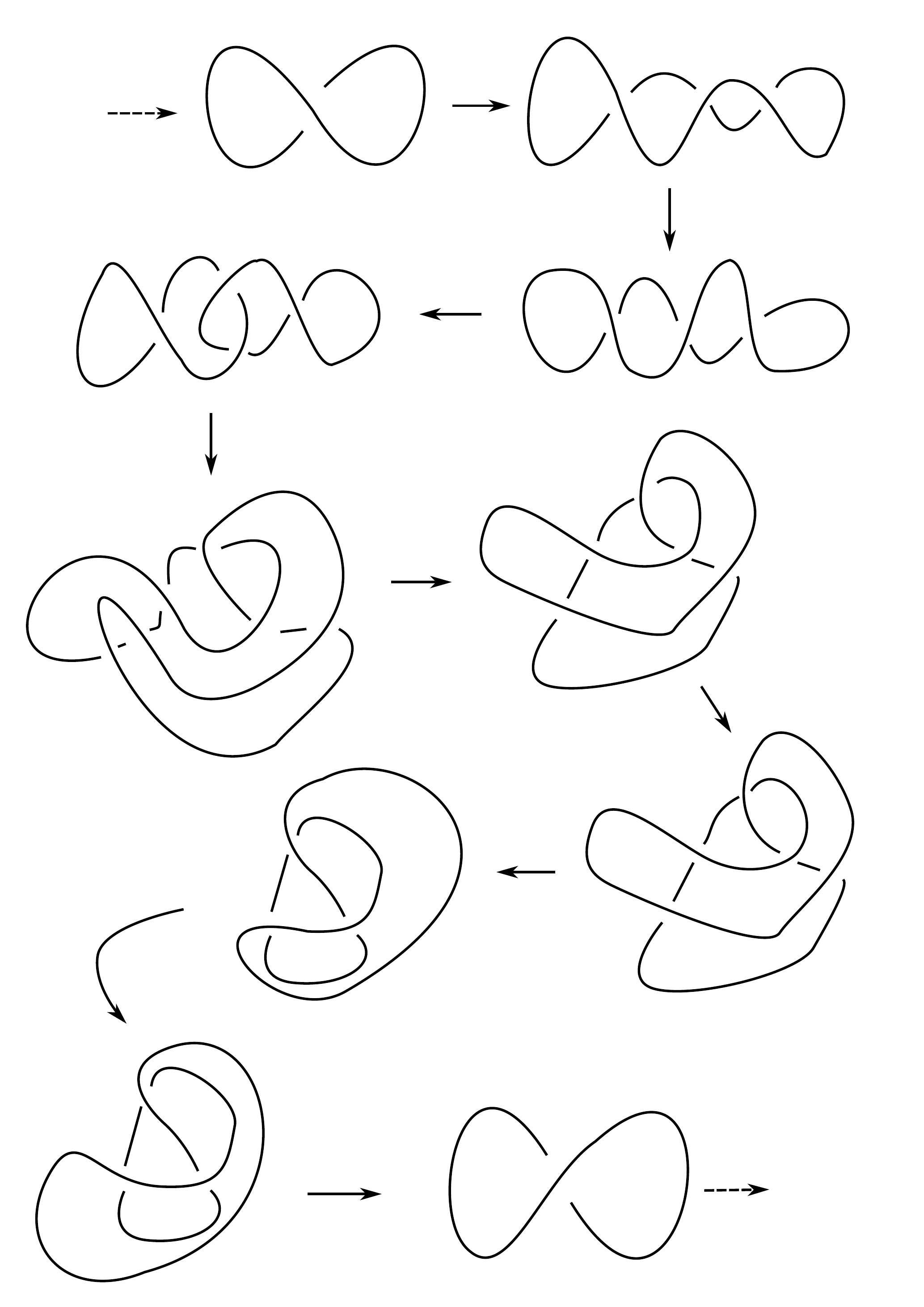
     \caption{A non-exact torus with all enclosed areas zero.}
\end{figure}
\\
\par
Indeed, we will prove that a sequence of Lagrangian diagrams related by a sequence of combinatorial moves satisfying the conditions of Proposition $3$ can be realized by a Lagrangian cobordism such that the integrals of the primitive $\lambda$ on some fixed curves can be made arbitrary. In particular, for different choices of these integrals we obtain non-Hamiltonian isotopic Lagrangians.

\begin{proof}[Proof of Proposition 3]
The exactness of a Lagrangian cobordism $L$ relative to its negative boundary can be showed just by checking that the integral of $\lambda$ along the well-chosen collection of closed curves and arcs $\{\alpha_i\},\{\beta_i\}$ and $\{\gamma_i\}$ displayed in Figure $18$. In particular the collection consists of the following:
\begin{itemize}
\item for every handle attachment move that creates a new component of the link, a closed curve $\alpha_i$  for which the $t$ coordinate is constant;
\item for every handle attachment move that merges two components of the link, a closed curve $\beta_i$;
\item for every handle attachment move that creates a new component of the link, an embedded arc  $\gamma_i$ with boundary on $K_0$.\end{itemize}

\begin{figure}[here]
  \centering
\def\svgwidth{0.8\textwidth}
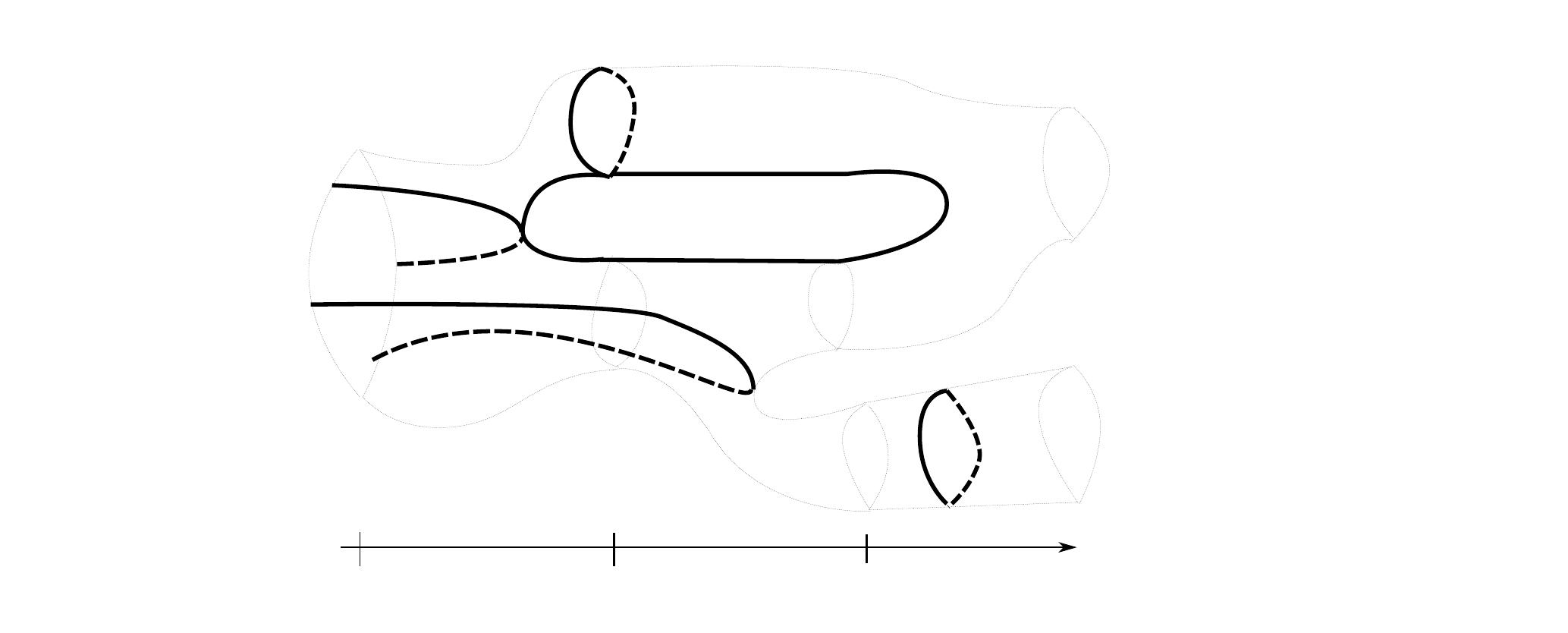
     \caption{The set of curves and arcs that we need to .}
\end{figure}
The integral condition on the slices with $t$ constant is exactly the fact that at each stage every component of the link has total area zero in the hypothesis of the proposition. Indeed, it is easy to see that our Lagrangian can be constructed so that the integral along each these curves is exactly zero. Hence we just have to show that it is possible to construct a Lagrangian such that the integral along each of the curve $\beta_i$ and $\gamma_i$ is zero. Suppose then that we are given a handle attachment cobordism which merges two components of the link, and suppose $\beta_i$ is the corresponding curve. We show how to modify the construction of such a cobordism exploiting the freedom of choice of the function $z(s,\vartheta_0)$ in Lemma $2$.  This will change the handle attachment, but will preserve the Lagrangian projections. Looking again at the construction of the move $H_1$ in the proof of Proposition $2$, before the local model for the handle attachment is glued we have a Lagrangian cobordism (the tube on the left in Figure $19$) induced by a family of immersions
\begin{equation*}
\varphi:[0,\varepsilon/2]\times S^1\rightarrow \mathbb{R}^2.
\end{equation*}
We can suppose without loss of generality that $\tilde{\varphi}(t,\vartheta_0)$ gives a parametrization of the solid subarc $\gamma\subset \beta_i$ shown in Figure $19$ for $t\in[0,\varepsilon/2]$. Now any choice of the height function $z(t,\vartheta_0)$ such that its value at the agrees with the one we started with near the endpoints will give rise to a cobordism that agrees with the original one outside this small tube. Because of the vertical split hypothesis in the statement of the proposition, by modifying accordingly also the $z$ functions of the other components of the link which project to the same disk in the $xy$-plane we can obtain an embedded cobordism. Furthermore, one has
\begin{equation*}
\int_{\gamma} \lambda=\int_0^{\varepsilon/2}\left[x(t)y'(t)dt+(t+1)z'(t,\vartheta_0)\right]dt
\end{equation*}
which can assume any desired value for an appropriate choice of the function $z(t,\vartheta_0)$. Hence we can arrange that the a new Lagrangian cobordism is such that the total integral along $\beta_i$ is any value we want, and in particular it can made exactly zero.
\begin{figure}[here]
  \centering
\def\svgwidth{0.8\textwidth}
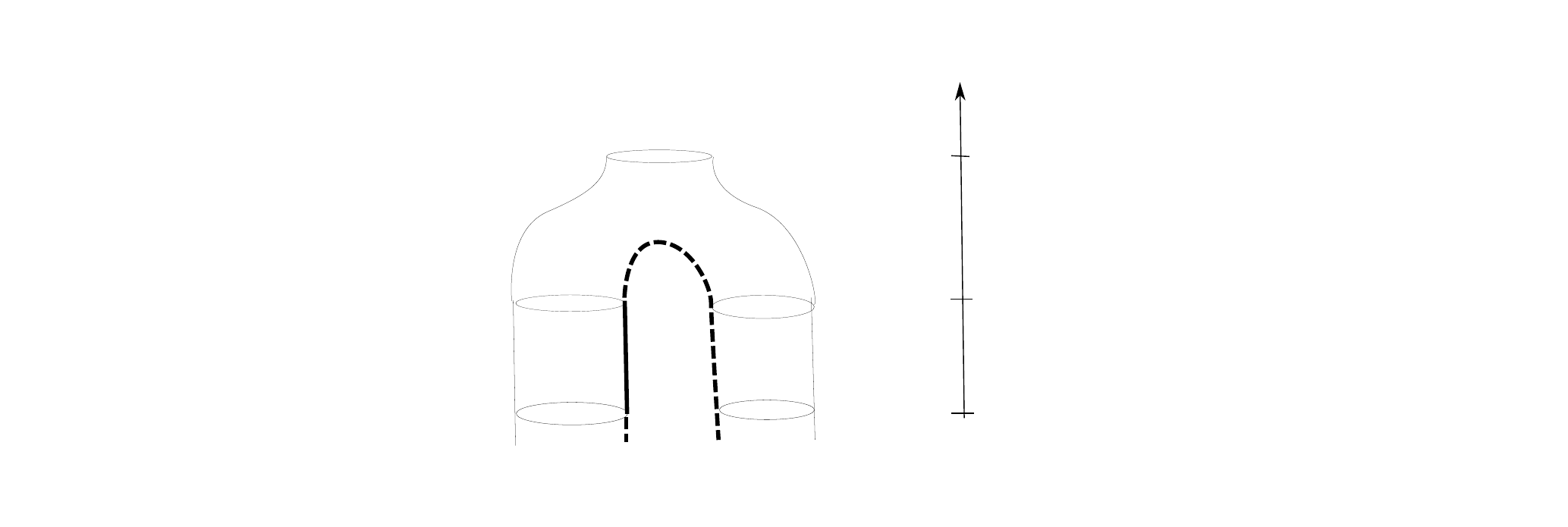
     \caption{Perturbing the family of immersions for $s\in[0,\varepsilon/2]$.}
\end{figure}

Finally, the same argument can be used in order to deal with the integral along the arcs $\gamma_i$, which is indeed simple as one can always achieve embeddedness.
\end{proof}

\vspace{1cm}
\section{An exact Lagrangian cap for $U_0$}

In this section we complete the proof of Theorem $1$ by constructing an exact Lagrangian cap for the knot $U_0$ using the set of combinatorial moves we have previously developed. Before describing the construction, we introduce some \textit{curl moves} which will be very useful in the construction. This is because there is not an analogue of the $R_1$ move for Lagrangian diagrams, so we cannot get easily rid of the curls (in fact, it is generally impossible). On the other hand, the curl moves allow us to move these curls around the diagram. 

\begin{lemma} Each of the moves in Figure $20$ can be realized by a Lagrangian cobordism. For $C_1$, the two domains on the right need to have area bigger than $A$, while for $C_2$ the two upper domains need to have area bigger than $A$.
\begin{figure}[here]
  \centering
\def\svgwidth{0.8\textwidth}
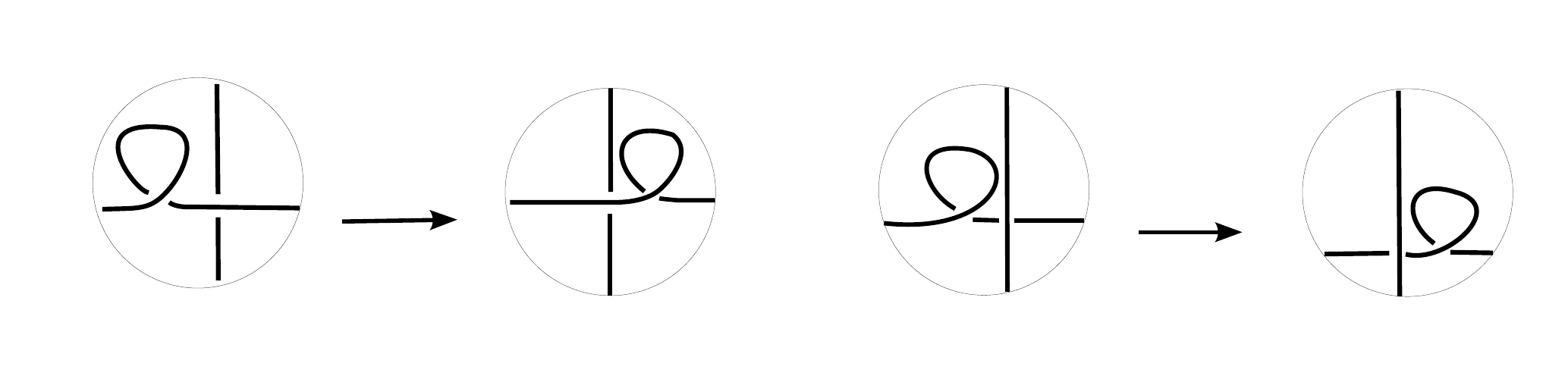
     \caption{The curl moves.}
\end{figure}
\end{lemma}

Clearly there are analogous inverse moves, and there are also versions in which the curl has the opposite crossing.

\begin{proof}We just show how to construct the move $C_1$, as the other is totally analogous. This is explained in Figure $21$.
\begin{figure}[here]
  \centering
\def\svgwidth{0.8\textwidth}
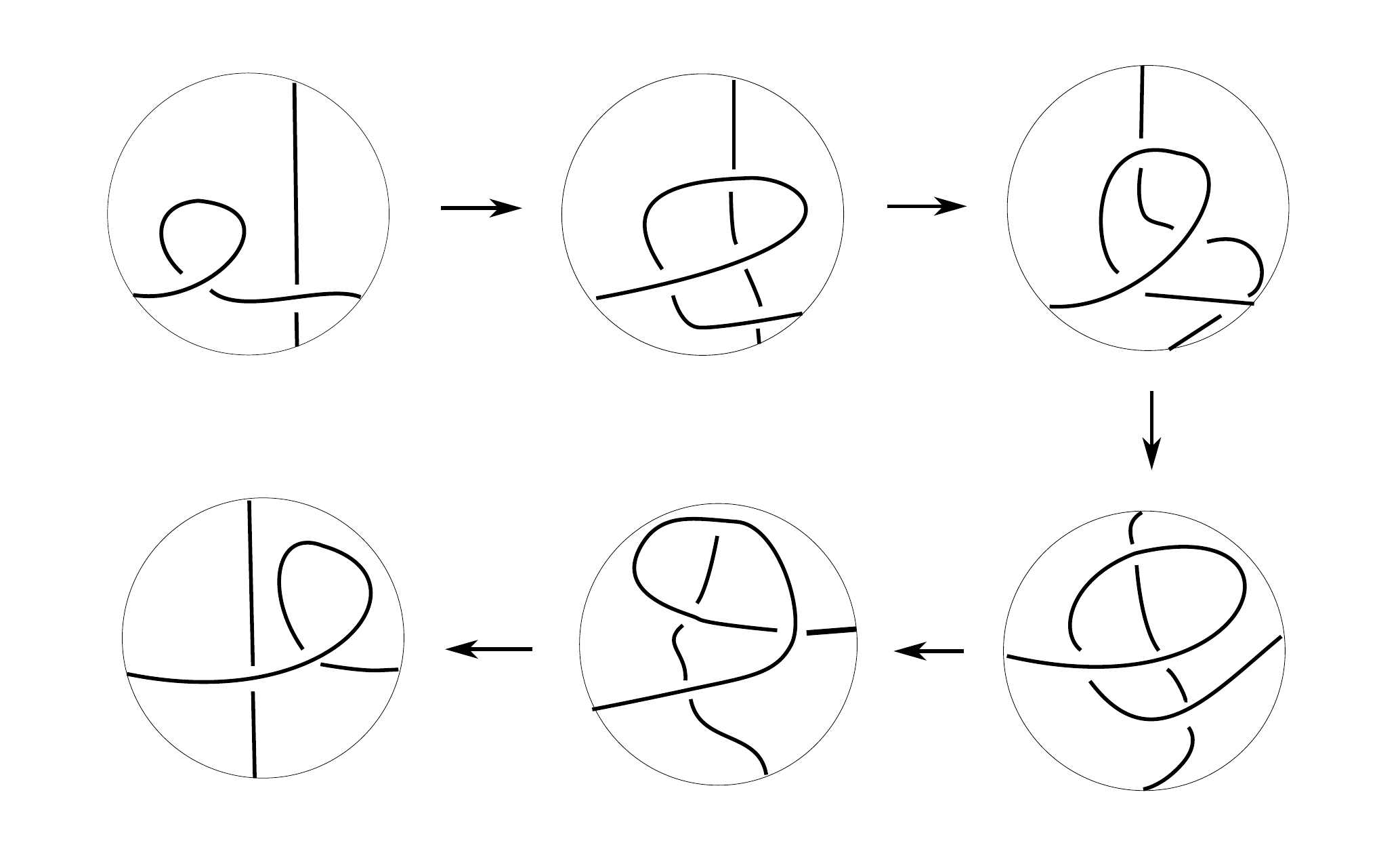
     \caption{Construction of the move $C_1$.}
\end{figure}
\newpage
In particular, the moves are the following:
\begin{itemize}
\item A move $R_2$ such that the new created domains have small area (here $0^+$ indicates a small number bigger that $0$);
\item An $R_0$ move (here we need the upper right domain to have area bigger than $A$, and $A^+$ indicates number slightly bigger that $A$);
\item An $R_0$ move crossing (here we need the lower right domain to have area bigger than $A$);
\item An $R_3$ move;
\item An $R_2$ move.
\end{itemize}
\end{proof}

Our strategy is to construct an exact Lagrangian cobordism to an union of a component $U_1$ as in Figure $3$ together with two standard Legendrian unknots. The first can be easily capped using Lemma $3$. To get rid of the other components, we will use the following \textit{unknot trick} before constructing the final cap.
\begin{lemma}Provided $A>B$, the move of Figure $22$ can be realized by a Lagrangian cobordism.
\end{lemma}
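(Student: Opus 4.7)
The plan is to realize the unknot trick by composing the combinatorial moves of Proposition 2 (namely $R_0, R_2, R_3, H_1$), the curl moves $C_1, C_2$ of Lemma 5, and the disk cap from Lemma 3. Since the purpose of the trick, as described in the paragraph preceding its statement, is to eliminate a standalone Legendrian unknot component from the diagram, the cobordism must be topologically nontrivial on that component: it will consist of a product portion during which we reshape the unknot, followed by the attachment of a Lagrangian disk cap closing it off from above.

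First I would use curl moves to reposition any curls of the unknot into a configuration convenient for the subsequent simplifications, and then apply a sequence of $R_2$ moves to bring the Lagrangian diagram of the unknot component into a form commensurate to $U_1$ (or its mirror). This stage will require one or more $R_0$ moves to shuffle area between the domains of the unknot and the surrounding ambient region, and the hypothesis $A > B$ enters here as the admissibility condition for these $R_0$ moves: recall that $R_0$ decreases the area of two specified domains by $A$, so those domains must carry area at least $A$ to begin with, which is precisely what the inequality $A > B$ provides when the relevant interior domain has area $B$ and the ambient domain has area $A$.

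Once the unknot component has been brought into $U_1$ form, I would cap it off by attaching a translate of the Lagrangian disk parametrized in the proof of Lemma 3. Because the unknot is, throughout this entire procedure, vertically split from the remainder of the link (this being the whole point of the trick), the disk cap can be localized in a neighborhood disjoint from the cobordism of the other components, so the resulting Lagrangian remains embedded. A final sequence of $R_2$ and $R_0$ cleanup moves then reconciles the ambient area labels with those shown on the right side of Figure 22, the net change being consistent with the conservation of the total signed area on the slices below the capping time.

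The main obstacle is the area bookkeeping, ensuring that at every stage the domains on which an $R_0$ move is about to act carry enough area for the move to be legal. The hypothesis $A > B$ is exactly what guarantees admissibility at the decisive step where the small $B$-labeled domain sits inside the $A$-labeled ambient region, and any relaxation of this inequality would force additional intermediate $R_0$ or $R_2$ moves that could in turn destroy either embeddedness or the orientation matching required if handle attachments enter the construction. A secondary but routine point is verifying that all the intermediate diagrams produced by the above sequence genuinely admit a realization in $\mathbb{R}^3$ (not merely as abstract knot projections), but this follows from the fact that each move in Proposition 2 and Lemma 5 is realized by an actual Lagrangian cobordism, so the output projections are by construction those of bona fide links.
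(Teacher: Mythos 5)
Your plan does not work, and the failure is conceptual rather than a matter of bookkeeping. You propose to eliminate the split unknot component by reshaping it with $R_0$, $R_2$ and curl moves into a diagram commensurate to $U_1$ ``(or its mirror)'' and then attaching the disk of Lemma 3. But Lemma 3 is asymmetric in exactly the way that matters here: a knot with diagram $U_1$ admits a disk \emph{cap}, while a knot with the mirrored crossing (which is precisely the Lagrangian projection of a standard Legendrian unknot, i.e.\ the component the trick is meant to remove) admits only a disk \emph{filling}. None of the moves at your disposal can reverse the sign of that single crossing --- $R_0$ leaves the underlying diagram unchanged, $R_2$, $R_3$ and the curl moves create or slide crossings but never flip an existing one --- so the component can never be brought to $U_1$ form without a handle attachment. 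Nor can the obstruction be circumvented by a cleverer isotopy: an embedded Lagrangian disk cap of such a component would contradict the framing constraint $\mathrm{tb}=\chi(L)$ recalled in the introduction (equivalently, glued to the filling disk it would produce a closed embedded Lagrangian sphere in $\mathbb{R}^4$, which is impossible since a closed Lagrangian surface there has $\chi=0$). There is also a smaller gap in your area bookkeeping: $R_0$ only redistributes area among the four domains adjacent to a single crossing, and before any merge the $B$-labelled domains of the split unknot share no crossing with the $A$-labelled domains, so no $R_0$ move can ``shuffle area between the domains of the unknot and the surrounding ambient region'' as you claim.

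The paper's proof goes the other way around: instead of capping the unknot, it absorbs it into the main component. One first performs an $H_1$ handle attachment merging the two components (possible for either orientation), which produces a configuration containing a domain of area $A-B$ --- this is where the hypothesis $A>B$ is used --- then an $R_0$ move of amount $B$ (legal because the adjacent domain has area $A>B$) shrinks the bigon between the two crossings to area $0$, and finally an $R_2$ move cancels those crossings, leaving the diagram on the right of Figure 22. The use of a handle attachment here is not incidental: it is exactly why, in the proof of Theorem 1, the vertically-split hypothesis of Proposition 3 must be verified for the unknot trick in order to keep the resulting cobordism exact. Your proposal, by avoiding the $H_1$ move, removes the very step that makes the trick work.
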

\begin{figure}[here]
  \centering
  \def\svgwidth{0.8\textwidth}
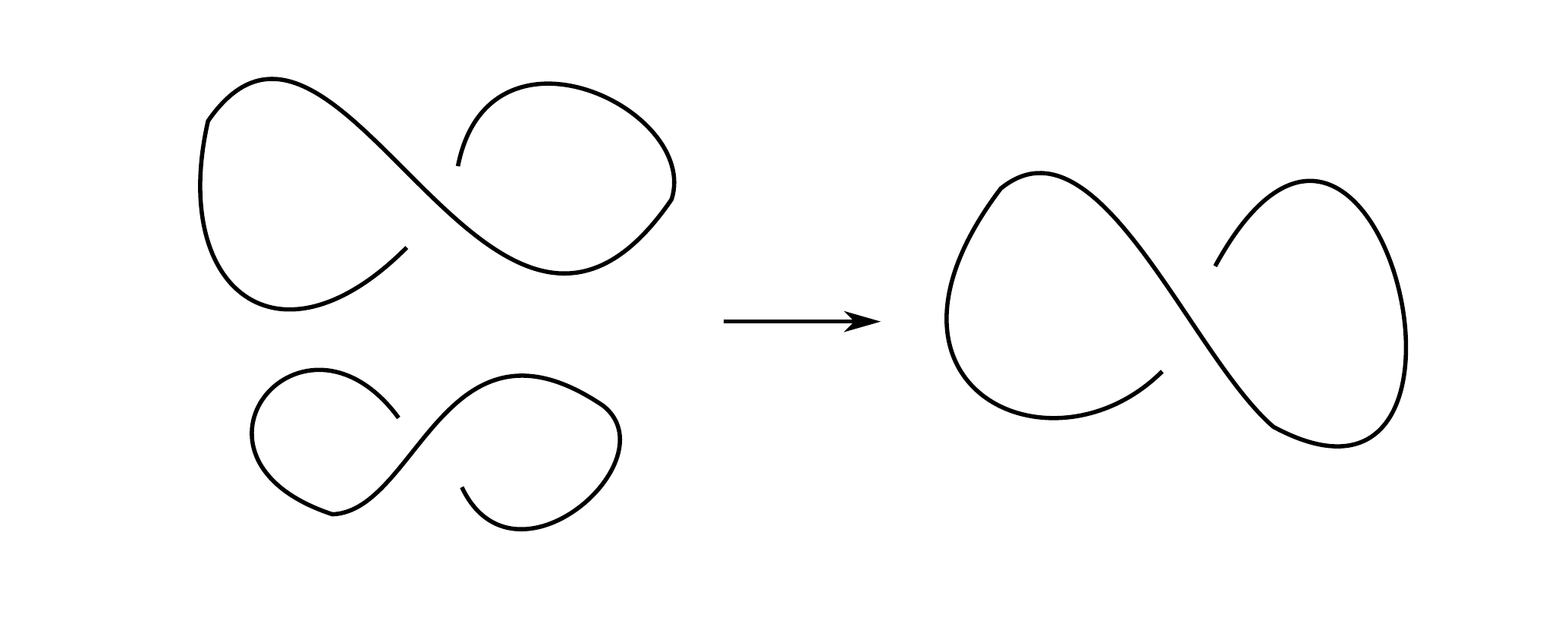
     \caption{The unknot trick.}
\end{figure}
\begin{proof}
The proof is contained in Figure $23$. The first move is an $H_1$ move (it is straightforward to see that this can be done for any choice of the orientations). Then one simply performs an $R_0$ and an $R_2$  move as suggested in the figure.
\begin{figure}[here]
  \centering
\def\svgwidth{0.8\textwidth}
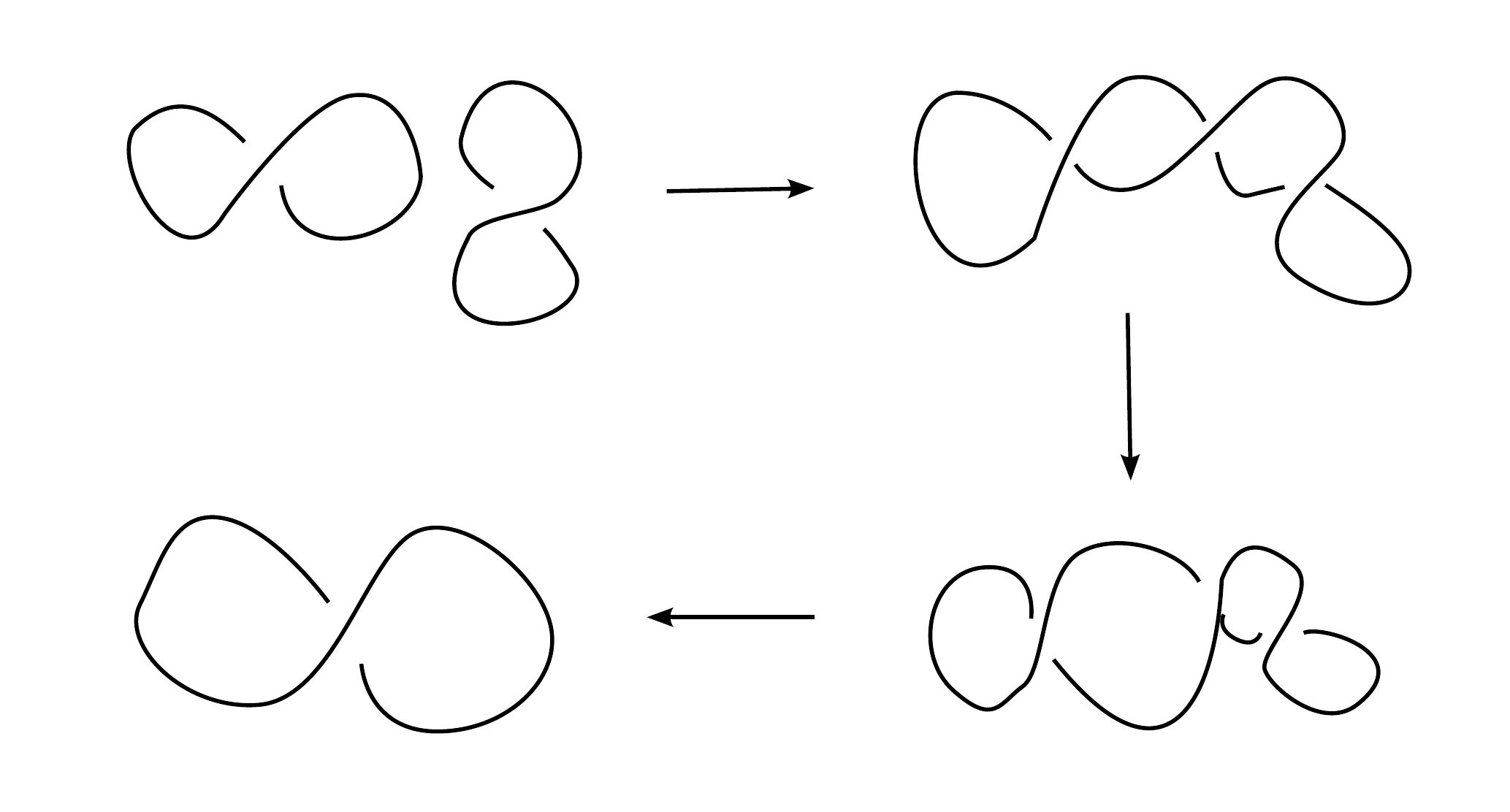
    \caption{Proof of the unknot trick.}
    \end{figure}
\end{proof}

Finally, we are ready to give a proof of Theorem $1$.

\begin{proof}[Proof of Theorem 1]We show how to construct an exact cap for the knot $U_0$. First, one performs the moves displayed in Figure $24$, which are in order an $R_0$, an $R_2$, an $R_0$, a $C_1$, a $C_2$ and an $H_2$. One then focuses on the right component of the result, and use the moves indicated in Figure $25$, which are two $R_0$ moves, two $C_2$ moves and an $H_2$ move. At this point, one can use the unknot trick to get rid of all the Legendrian unknot components, and then cap the remaining one using an $R_0$ move. We hence obtain a Lagrangian cap of the knot $U_0$, and we can perform the construction so that the result is exact by the criterion provided by Proposition $3$. Indeed, no handle attachment moves are used in the construction of the curl moves, and all the handle attachments (including those used for the unknot trick) are performed on vertically split components. Finally, the theorem follows from the existence of this cap for $U_0$ and Proposition $1$ and Lemma $1$ in Section $1$.
\end{proof}

\vspace{1cm}
\section{Further problems}

The techniques used in order to prove Theorem $1$ via the combinatorial moves might be adapted to other problems related to Legendrian knots. On the other hand, even if it is easy to construct a Lagrangian projection of a given Legendrian knot (see for example the algorithm in \cite{Etn}), we are not aware of any easy way to determine a Lagrangian diagram, i.e. a way to assign areas to the domains. This is a big limitation for the techniques developed in the present paper, especially when one tries to work out some practical examples. Indeed, it is already non trivial to understand if a given Lagrangian diagram can be the Lagrangian projection of a Legendrian knot.
\par
Another interesting task is to find a good generalization of the exactness criterion provided by Proposition $3$, and in particular one that fits in our combinatorial model. This might be related to very subtle issues in contact and symplectic geometry, and indeed it is not improbable that one can find a description of a non-exact torus such that all the areas enclosed are zero and the handle attachment that merges two components is performed on two unlinked (but not vertically split) components (making the strict requirements in Proposition $3$ look not that excessive).
\par
Finally, there are two more questions that naturally arises from our treatment.
\begin{question}Is there any Legendrian knot with an exact Lagrangian cap of genus $1$?
\end{question}
While every cap we have constructed in the paper has genus at least $2$, this case is not ruled out by the slice Thurston-Bennequin inequality (while the genus $0$ case is).

\begin{question}Is there any not destabilizable Legendrian knot which admits an exact Lagrangian cap?
\end{question}
This is a very interesting question, as it is a fundamental assumption for Theorem $1$ that we can add as many stabilizations as desired.
\begin{figure}
  \centering
\def\svgwidth{0.9\textwidth}
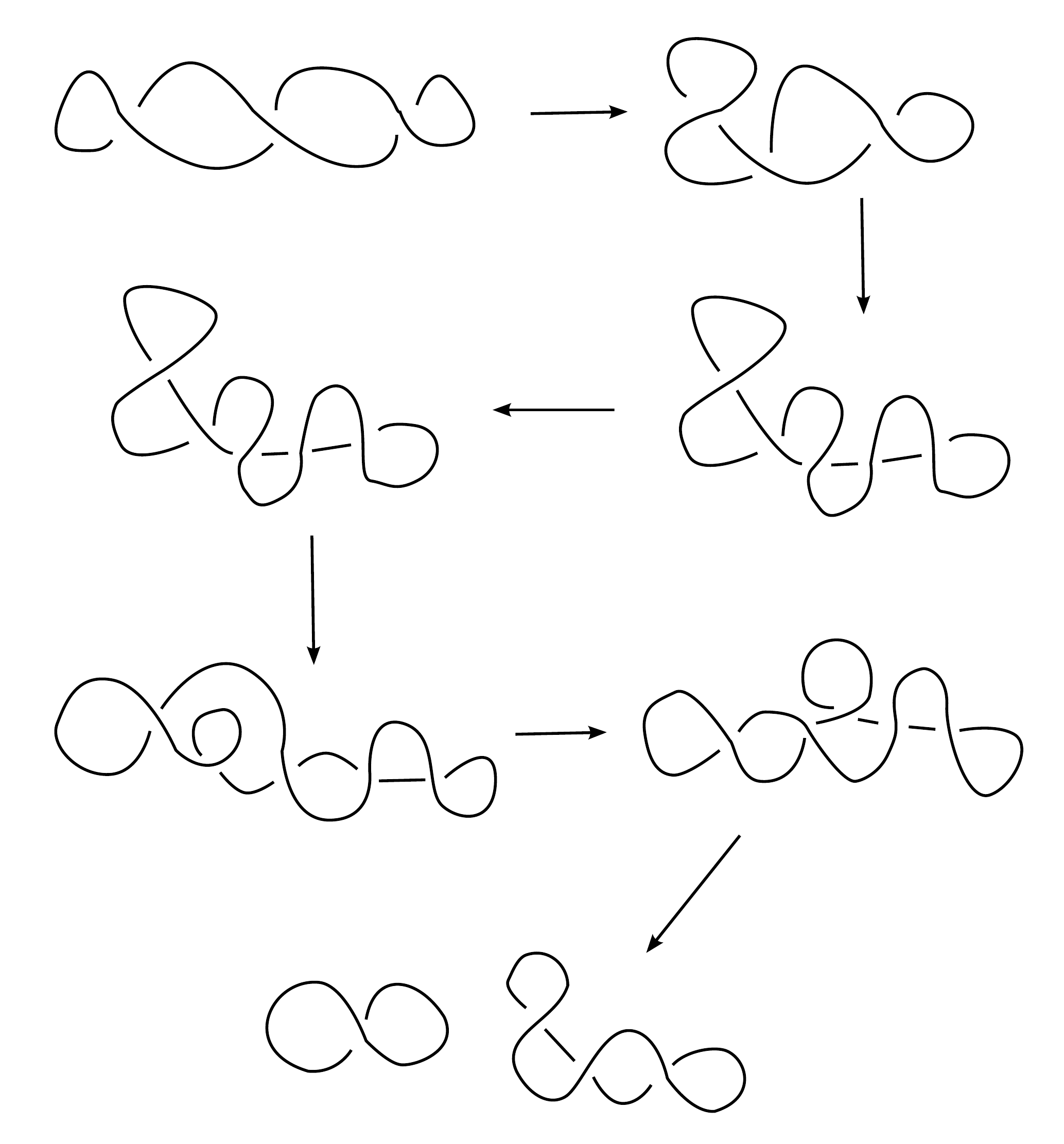
     \caption{Constructing a cap for $U_0$ - part $1$.}
\end{figure}

\begin{figure}
  \centering
\def\svgwidth{0.9\textwidth}
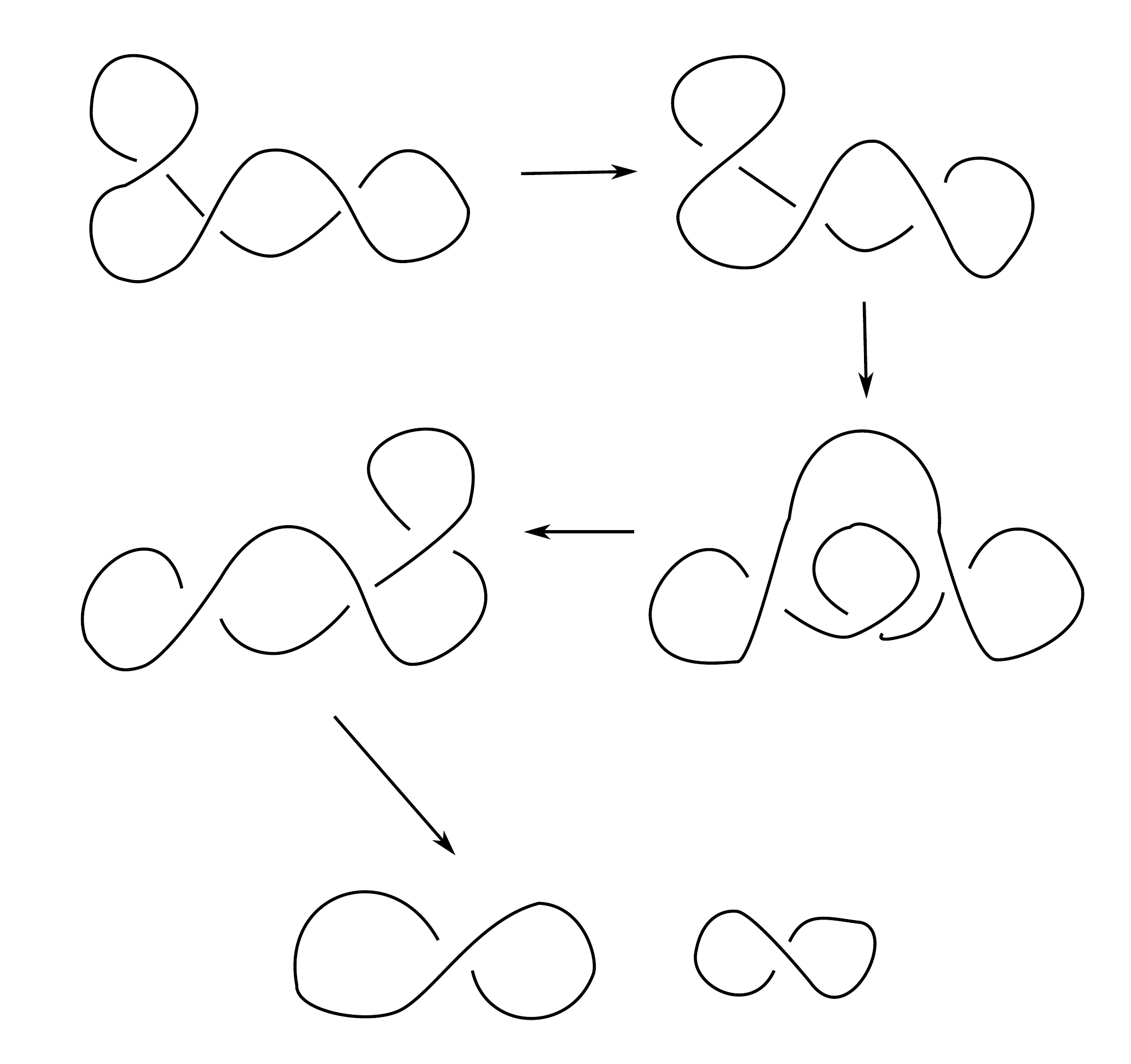
     \caption{Constructing a cap for $U_0$ - part $2$.}
\end{figure}
\clearpage
\bibliographystyle{alpha}
\bibliography{biblio}

\end{document}